 \newtheorem{thm}{Theorem}
 \newtheorem{cor}[thm]{Corollary}
 \newtheorem{lem}[thm]{Lemma}
 \theoremstyle{definition}
 \newtheorem{defn}[thm]{Definition}
 \newtheorem{rem}[thm]{Remark}
 \numberwithin{equation}{section}
 \numberwithin{thm}{section}
\newcommand{\ccomma}{\mathpunct{\raisebox{0.5ex}{,}}}
\newcommand{\dist}{\operatorname{dist}}
\def\leq{\leqslant}
\def\geq{\geqslant}
\begin{document}

\title[A functional model for the Fourier--Plancherel operator]{A functional model for the Fourier--Plancherel operator
truncated to the positive semiaxis}

\author{V.~Katsnelson}

\address{Department of Mathematics,\\
The Weizmann Institute,\\
76100, Rehovot, Israel}

\email{victor.katsnelson@weizmann.ac.il;\\ victorkatsnelson@gmail.com}

\keywords{truncated Fourier--Plancherel operator, functional model for a linear operator}
\date{27 октября, 2017}
\dedicatory{This paper is dedicated to the memory of my colleague 
Mikhail Solomyak.}
\begin{abstract}
The truncated Fourier operator
\(\mathscr{F}_{\mathbb{R^{+}}}\),
\begin{equation*}%
(\mathscr{F}_{\mathbb{R^{+}}}x)(t)=\frac{1}{\sqrt{2\pi}}%
\int\limits_{\mathbb{R^{+}}}x(\xi)e^{it\xi}\,d\xi\,,\ \ \
t\in{}{\mathbb{R^{+}}},
\end{equation*}%
is studied. The operator \(\mathscr{F}_{\mathbb{R^{+}}}\) is viewed as an
operator acting in the spa\-ce~\(L^2(\mathbb{R^{+}})\). A functional model
for the operator \(\mathscr{F}_{\mathbb{R^{+}}}\) is constructed. This \break functional
model is the operator of multiplication by an appropriate\break (${2\times2}$)-ma\-t\-rix function
acting in the space \(L^2(\mathbb{R^{+}})\oplus{}L^2(\mathbb{R^{+}})\). Using this functional model,
the spectrum of the operator \(\mathscr{F}_{\mathbb{R^{+}}}\) is found. The resolvent
of the operator \(\mathscr{F}_{\mathbb{R^{+}}}\) is estimated near its spectrum.
\end{abstract}
\maketitle

\noindent
\textsf{Notation:}\\
\(\mathbb{R}\) \ \ is  the set of all real numbers.\\
\(\mathbb{R}^{+}\) is the set of all positive real numbers.\\
\(\mathbb{C}\) \ \ is the set of all complex numbers.\\
\(\mathbb{Z}\) \ \ is the set of all integes.\\
\(\mathbb{N}=\{1,2,3,\,\dots\}\) is the set of all natural numbers.
\setcounter{section}{0}
\section{The Fourier--Plancherel operator truncated to the positive semiaxis.}

In this paper we study the truncated Fourier operator
\(\mathscr{F}_{\mathbb{R^{+}}}\),
\begin{equation}%
\label{DTFTr}
(\mathscr{F}_{\mathbb{R^{+}}}x)(t)=\frac{1}{\sqrt{2\pi}}%
\int\limits_{\mathbb{R^{+}}}x(\xi)e^{it\xi}\,d\xi\,,\ \ \
t\in{}{\mathbb{R^{+}}}.
\end{equation}%
The operator \(\mathscr{F}_{\mathbb{R^{+}}}\) is viewed as an
operator acting in the space \(L^2(\mathbb{R^{+}})\) of all square integrable
 complex-valued functions on \(\mathbb{R^{+}}\) equipped
with the inner product
\begin{equation*}
\langle{}x,y\rangle_{L^2(\mathbb{R^{+}})}=\int\limits_{\mathbb{R^{+}}}x(t)\overline{y(t)}\,dt.
\end{equation*}
The operator \(\mathscr{F}^{ \ast}_{\mathbb{R^{+}}}\) adjoint to~\(\mathscr{F}_{\mathbb{R^{+}}}\) with respect
 to this inner product is
\begin{equation}%
\label{DTFTrA}
(\mathscr{F}_{\mathbb{R^{+}}}^{\ast}x)(t)=\frac{1}{\sqrt{2\pi}}%
\int\limits_{\mathbb{R^{+}}}x(\xi)e^{-it\xi}\,d\xi\,,\ \ \
t\in{}{\mathbb{R^{+}}}.
\end{equation}%
The operator \(\mathscr{F}_{\mathbb{R^{+}}}\) has the form
\begin{equation}%
\label{UnDil}%
 \mathscr{F}_{\mathbb{R^{+}}}=
P_{\mathbb{R^{+}}}\,\mathscr{F}\,P_{\mathbb{R^{+}}|_{{\scriptstyle
L}^2{\scriptstyle({\mathbb{R}}^{+})}}}\,,
\end{equation}%
where \(\mathscr{F}\) is the Fourier--Plancherel operator on the whole real
axis:
\begin{equation}
\label{FWRA}
(\mathscr{F}x)(t)=\frac{1}{\sqrt{2\pi}}%
\int\limits_{\mathbb{R}}x(\xi)e^{it\xi}\,d\xi\,,\ \ \
t\in{}{\mathbb{R}},
\end{equation}
\begin{equation*}
\mathscr{F}:\,L^2(\mathbb{R})\to{}L^2(\mathbb{R})\,,
\end{equation*}
and \(P_{_{\scriptstyle\mathbb{R^{+}}}}\) is the natural
orthogonal projector from \(L^2(\mathbb{R})\) onto
\(L^2(\mathbb{R}^{+})\):
\begin{equation}
\label{NaPr}%
 (P_{\mathbb{R^{+}}}x)(t)=\mathds{1}_{_{\scriptstyle\mathbb{R}^{+}}}\!(t)\,x(t),\ \
 x\in{}L^2(\mathbb{R}),\quad t\in\mathbb{R}^{+},
\end{equation}
where \(\mathds{1}_{_{\scriptstyle\mathbb{R^{+}}}}\!(t)\) is the indicator
function of the set \(\mathbb{R}^{+}\). For any set \(E\), its
indicator function \(\mathds{1}_{_{\scriptstyle E}}\) is
\begin{equation}
\label{IndF}
\mathds{1}_E(t)= \begin{cases}1&\ \textup{if}\ t\in{}E,\\
0&\ \textup{if}\ t\not\in{}E.
\end{cases}
\end{equation}
It should be mentioned that the Fourier operator \(\mathscr{F}\) is a unitary operator in~\(L^2(\mathbb{R})\):
\begin{equation}
\label{Unit}
\mathscr{F}^{\ast}\mathscr{F}=\mathscr{F}\mathscr{F}^{\ast}=
\mathscr{I}_{L^2(\mathbb{R})},
\end{equation}
where \(\mathscr{I}_{L^2(\mathbb{R})}\) is the identity operator in
\(L^2(\mathbb{R})\) and \(\mathscr{F}^{\ast}\) is the operator
adjoint to~\(\mathscr{F}\) with respect to the
standard inner product in \(L^2(\mathbb{R})\).

From \eqref{UnDil} and  \eqref{Unit} it follows that the operators
\(\mathscr{F}_{\mathbb{R^{+}}}\) and
\(\mathscr{F}_{\mathbb{R^{+}}}^{\ast}\) are contractive: %
\(\|\mathscr{F}_{\mathbb{R^{+}}}\|\leq1\),
\(\|\mathscr{F}_{\mathbb{R^{+}}}^{\ast}\|\leq1\).  Later it will be shown that actually
\begin{equation}
\label{NorTrF}%
 \|\mathscr{F}_{\mathbb{R^{+}}}\|=1,\quad
\|\mathscr{F}_{\mathbb{R^{+}}}^{\ast}\|=1.
\end{equation}
Nevertheless, these operators are strictly contractive:
\begin{equation}
\label{StrCon}%
 \|\mathscr{F}_{\!_{\scriptstyle\mathbb{R_{+}}}}x\|<\|x\|,\quad
 \|\mathscr{F}^{\ast}_{\!_{\scriptstyle\mathbb{R^{+}}}}x\|<\|x\|\, \quad
 \mbox{for all}\quad x\in{}L^2(\mathbb{R}^{+}),\quad x\not=0\,,
\end{equation}
and their spectral radii
\(r(\mathscr{F}_{\!_{\scriptstyle\mathbb{R^{+}}}})\) and
\(r(\mathscr{F}^{\ast}_{\!_{\scriptstyle\mathbb{R^{+}}}})\) are
less than one:
\begin{equation}
\label{SpRad}%
r(\mathscr{F}_{\!_{\scriptstyle\mathbb{R^{+}}}})=
r(\mathscr{F}^{\ast}_{\!_{\scriptstyle\mathbb{R^{+}}}})=1/\sqrt{2}.
\end{equation}
In particular, the operators
\(\mathscr{F}_{\!_{\scriptstyle\mathbb{R^{+}}}}\) and
\(\mathscr{F}^{\ast}_{\!_{\scriptstyle\mathbb{R^{+}}}}\) are
contractions of class~\(C_{00}\) in the sense of \cite{Sz}.
(See \cite[Chapter 2, \S4]{Sz}.)

In \cite{Sz}, the spectral theory of contractions in  Hilbert
space was developed. The starting point of this theory is the
representation of a given contractive operator \(A\) acting in
a Hilbert space \(\mathscr{H}\) in the form
\begin{equation}
\label{ComCon}%
 A=PUP,
\end{equation}
 where \(U\) is a unitary operator acting is some \emph{ambient} Hilbert
space \(\mathfrak{H}\), \(\mathscr{H}\subset\mathfrak{H}\), and
\(P\) is the orthogonal projector from \(\mathfrak{H}\) onto \(\mathscr{H}\). In the
construction of \cite{Sz} it is required that not only~\eqref{ComCon} but also the series of
identities
\begin{equation}
\label{Dilat}%
 A^n=PU^nP,\quad n\in\mathbb{N},
\end{equation}
be true. The unitary operator \(U\) acting in an ambient Hilbert
space \(\mathfrak{H}\), \(\mathcal{H}\subset\mathfrak{H}\), is
called  a \emph{unitary dilation of the operator \(A\),}
\(A:\mathcal{H}\to\mathcal{H}\), if identities \eqref{Dilat}
are fulfilled. In  \cite{Sz} it was shown that every contractive
operator \(A\) admits a unitary dilation. By using the unitary
dilation, a functional model of the operator \(A\) was constructed.
This functional model is an operator acting in some Hilbert space
of analytic functions. The functional model of the operator \(A\)
is an operator  unitarily equivalent to \(A\). The spectral
theory of the original operator \(A\) is developed by analyzing
its functional model.
However, the functional model constructed in \cite{Sz} is not suitable for the spectral analysis of the truncated
Fourier--Plancherel operator \(\mathscr{F}_{\mathbb{R^{+}}}\).

Relation \eqref{UnDil} is of the form \eqref{ComCon}, where
\(\mathscr{H}=L^2(\mathbb{R}^{+})\),
\(\mathfrak{H}=L^2(\mathbb{R})\),\break
 ${U\!=\!\mathscr{F}}$, $A=\mathscr{F}_{_{\scriptstyle\mathbb{R}^{+}}}$, and
\(P=P_{\mathbb{R^{+}}}\) is the orthoprojector from \(L^2(\mathbb{R})\) onto
\(L^2(\mathbb{R}^{+})\), see~\eqref{NaPr}. For these objects, identities \eqref{Dilat} do not hold true for all \(n\in\mathbb{N}\), but only for \(n=1\). So, the operator \(\mathscr{F}\) is not a unitary dilation of its truncation~\(\mathscr{F}_{_{\scriptstyle\mathbb{R}^{+}}}\). Nevertheless, we succeeded in constructing a functional model of the operator~\(\mathscr{F}_{_{\scriptstyle\mathbb{R}^{+}}}\) such that it is easily analyzable. Analyzing this model, we can develop a complete spectral theory of the operator \(\mathscr{F}_{_{\scriptstyle\mathbb{R}^{+}}}\).

\section{The model space.}
\label{MS}
\begin{defn}{}\ \\
$1.$ The \emph{model space} \(\mathfrak{M}\)
is the set of all \(2\times1\) columns \(\varphi=
\displaystyle
\begin{bmatrix}
\varphi_{+}\\
\varphi_{-}
\end{bmatrix}
\) whose entries \(\varphi_{+}\) and
\(\varphi_{-}\) are arbitrary complex-valued functions of class~\(L^2(\mathbb{R^{+}})\). \\

\noindent $2.$ The space \(\mathfrak{M}\) is
equipped by the natural linear operations.

\noindent$3.$ The inner product \(\langle\varphi,\psi\rangle_{\mathfrak{M}}\) of columns
 \(\varphi=
\displaystyle
\begin{bmatrix}
\varphi_{+}\\
\varphi_{-}
\end{bmatrix}
\) and \(\psi=
\displaystyle
\begin{bmatrix}
\psi_{+}\\
\psi_{-}
\end{bmatrix}
\) belonging to this space
is defined as
\begin{equation}
\label{InP}
\langle\varphi,
\psi\rangle_{\mathfrak{M}}=\langle\varphi_{+},\psi_{+}\rangle_{L^2(\mathbb{R}^{+})}
+\langle\varphi_{-},\psi_{-}\rangle_{L^2(\mathbb{R}^{+})}.
\end{equation}
In particular,
\begin{equation}
\label{No}
\|\varphi\|^2_{\mathfrak{M}}=\|\varphi_{+}\|^2_{L^2(\mathbb{R}^{+})}+
\|\varphi_{-}\|^2_{L^2(\mathbb{R}^{+})}.
\end{equation}
\end{defn}

\begin{rem}
The model space \(\mathfrak{M}\) is merely the orthogonal sum of two copies
of the space \(L^2(\mathbb{R}^{+})\). The standard notation \(L^2(\mathbb{R^{+}})\oplus{}L^2(\mathbb{R^{+}})\) for such orthogonal sum
does not reflect the fact that the elements of \(\mathfrak{M}\) are $(2\times1)$-\emph{columns}. The notation
 \(\begin{bmatrix}
 L^2(\mathbb{R^{+}})\\{}\oplus\\{}L^2(\mathbb{R^{+}})
 \end{bmatrix}
 \) is more logical, but too bulky.
\end{rem}

\medskip

We define a linear mapping \(U\) of
the space \(L^2(\mathbb{R})\) into the model space.  For \(x\in{}L^2(\mathbb{R}^{+})\), the formal definition is
\begin{equation}
\label{FoD}
(Ux)(\mu)=
\begin{bmatrix}
(Ux)_{+}(\mu)\\[1.0ex]
(Ux)_{-}(\mu)
\end{bmatrix}
\ccomma \quad \mu\in\mathbb{R}^{+},
\end{equation}
where
\begin{subequations}
\label{fo}
\begin{align}
\label{fo+}
(Ux)_{+}(\mu)=&\frac{1}{\sqrt{2\pi}}\int\limits_{\mathbb{R}^{+}} x(\xi)\,\xi^{-1/2}\xi^{+i\mu}\,d\xi,\qquad \mu\in\mathbb{R}^+,\\
\label{fo-}
(Ux)_{-}(\mu)=&\frac{1}{\sqrt{2\pi}}\int\limits_{\mathbb{R}^{+}} x(\xi)\,\xi^{-1/2}\xi^{-i\mu}\,d\xi,\qquad \mu\in\mathbb{R}^+.
\end{align}
\end{subequations}
Here and in what follows, \(\xi^\zeta=e^{\zeta\ln\xi}\), where \(\ln\xi\in\mathbb{R}\) for \(\xi\in\mathbb{R}^{+}\).

If \(x\in{}L^2(R^{+})\), the functions \(x(\xi)\,\xi^{-1/2}\xi^{\pm{}i\mu}\) occurring
in \eqref{fo} may fail to be integrable with respect to  Lebesgue
measure \(d\xi\) on \(\mathbb{R}^{+}\). Therefore, the integrals
in \eqref{fo} may fail to exist as Lebesgue integrals.
\begin{defn}
\label{alf}
The \emph{set \(\mathcal{D}\)} is the set of all functions \(x\in{}L^2(\mathbb{R}^{+})\)  satisfying
\begin{equation}
\label{exc}
\int\limits_{\mathbb{R}^{+}}|x(\xi)|\xi^{-1/2}d\xi<\infty.
\end{equation}
\end{defn}

\begin{lem}
\label{ExU}
If a function \(x\) belongs to \(L^2(\mathbb{R}^{+})\)
and its support $\textup{supp}\,x$ lies  strictly
inside the positive semiaxis $\mathbb{R}^{+},$ then
$x\in\mathcal{D}$.
\end{lem}
\begin{proof}
\begin{equation*}
\begin{split}
\int\limits_{\mathbb{R}^{+}}
\big|x(\xi)|\xi^{-1/2}d\xi&=
\int\limits_{\xi\in\textup{supp}x}
\big|x(\xi)|\xi^{-1/2}\,d\xi
\\
&\leq
\bigg\{\int\limits_{\mathbb{R}^{+}}
|x(\xi)|^2d\xi\bigg\}^{1/2}\cdot\,\,
\bigg\{\int\limits_{\xi\in\textup{supp}x}
|\xi|^{-1}d\xi\bigg\}^{1/2}<\infty.\qedhere
\end{split}
\end{equation*}
\end{proof}
\noindent
For \(x\in\mathcal{D}\), the
integrals on the right-hand sides of \eqref{fo+} and \eqref{fo-}
exist as Lebesgue integrals for every $\mu\in\mathbb{R}^{+}$. So, the functions \((Ux)_{+}(\mu)\) and \break \((Ux)_{-}(\mu)\) are well defined
\emph{for every \(\mu\in\mathbb{R}^{+}\).}
\begin{lem}
\label{SqU}
If a function \(x\) belongs to \(\mathcal{D}\),
then both functions \((Ux)_{+}\) and \((Ux)_{-}\) belong to
\(L^2(\mathbb{R}^{+})\). Moreover, we have
\begin{equation}
\label{paE}
\|(Ux)_{+}\|_{L^2(\mathbb{R}^{+})}^{2}+\|(Ux)_{-}\|_{L^2(\mathbb{R}^{+})}^{2}=
\|x\|^2_{L^2(\mathbb{R}^{+})}.
\end{equation}
\end{lem}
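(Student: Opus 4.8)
The plan is to reduce the claim to the Plancherel theorem for the Fourier--Plancherel operator \(\mathscr{F}\) on \(L^2(\mathbb{R})\) by means of the logarithmic substitution \(\xi=e^{s}\), which turns the power kernels \(\xi^{\pm i\mu}\) appearing in \eqref{fo+} and \eqref{fo-} into ordinary Fourier exponentials. The whole argument is a change of variables followed by one application of unitarity.

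First I would introduce the auxiliary function \(g(s)=e^{s/2}x(e^{s})\), \(s\in\mathbb{R}\). The substitution \(\xi=e^{s}\) in \(\|x\|_{L^2(\mathbb{R}^{+})}^{2}=\int_{\mathbb{R}^{+}}|x(\xi)|^{2}\,d\xi\) gives immediately
\[
\|g\|_{L^2(\mathbb{R})}^{2}=\int_{\mathbb{R}}|x(e^{s})|^{2}e^{s}\,ds=\|x\|_{L^2(\mathbb{R}^{+})}^{2},
\]
so \(g\in L^2(\mathbb{R})\); and the defining condition \eqref{exc} of the set \(\mathcal{D}\), after the same substitution, reads \(\int_{\mathbb{R}}|g(s)|\,ds<\infty\), i.e.\ \(g\in L^{1}(\mathbb{R})\). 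Thus the hypothesis \(x\in\mathcal{D}\) is precisely what forces \(g\in L^{1}(\mathbb{R})\cap L^2(\mathbb{R})\).

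Next I would rewrite the integrals \eqref{fo+}, \eqref{fo-} in terms of \(g\). Using \(\xi^{-1/2}\xi^{\pm i\mu}\,d\xi=e^{s/2}e^{\pm i\mu s}\,ds\) one finds, for every \(\mu\in\mathbb{R}^{+}\),
\[
(Ux)_{+}(\mu)=(\mathscr{F}g)(\mu),\qquad (Ux)_{-}(\mu)=(\mathscr{F}g)(-\mu).
\]
Because \(g\in L^{1}(\mathbb{R})\cap L^2(\mathbb{R})\), the Lebesgue integral defining \(\mathscr{F}g\) coincides with the Fourier--Plancherel image of \(g\); hence these identities are legitimate and \(\mathscr{F}g\in L^2(\mathbb{R})\), which already yields the membership \((Ux)_{\pm}\in L^2(\mathbb{R}^{+})\). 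To conclude, I would split the Plancherel identity across the origin: by unitarity \eqref{Unit}, \(\|\mathscr{F}g\|_{L^2(\mathbb{R})}^{2}=\|g\|_{L^2(\mathbb{R})}^{2}\), and writing the left-hand side as \(\int_{0}^{\infty}|(\mathscr{F}g)(\mu)|^{2}\,d\mu+\int_{-\infty}^{0}|(\mathscr{F}g)(\mu)|^{2}\,d\mu\) and substituting \(\mu\mapsto-\mu\) in the second integral, the two pieces become exactly \(\|(Ux)_{+}\|_{L^2(\mathbb{R}^{+})}^{2}\) and \(\|(Ux)_{-}\|_{L^2(\mathbb{R}^{+})}^{2}\). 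Combining this with the norm identity for \(g\) from the previous step gives \eqref{paE}.

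The main obstacle is not analytic depth but the bookkeeping needed to certify that the pointwise Lebesgue integrals in \eqref{fo+} and \eqref{fo-} genuinely represent the \emph{unitary} Fourier--Plancherel transform rather than merely some formal integral. This is settled exactly by the hypothesis \(x\in\mathcal{D}\), which is tailored to guarantee \(g\in L^{1}(\mathbb{R})\cap L^2(\mathbb{R})\), the class on which the naive integral formula for \(\mathscr{F}\) and its \(L^2\)-extension agree; once this is in place the Plancherel identity may be invoked without further care.
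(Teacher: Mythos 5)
Your proof is correct and follows essentially the same route as the paper's own: the logarithmic substitution \(\xi=e^{s}\) producing \(v(\eta)=e^{\eta/2}x(e^{\eta})\) (your \(g\)), the identification of \((Ux)_{+}(\mu)\) and \((Ux)_{-}(\mu)\) with the values of \(\mathscr{F}v\) at \(\pm\mu\), and a single application of the Parseval/Plancherel identity. Your explicit observation that \(x\in\mathcal{D}\) is equivalent to \(g\in L^{1}(\mathbb{R})\cap L^{2}(\mathbb{R})\), so the pointwise integrals agree with the unitary transform, is a slightly more careful justification of a step the paper leaves implicit.
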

\begin{proof}
 Changing the variable \(\xi=e^{\eta}\) in \eqref{fo},
we obtain
\begin{subequations}
\label{co}
\begin{align}
\label{co+}
(Ux)_{+}(\mu)=&\frac{1}{\sqrt{2\pi}}\int\limits_\mathbb{R} v(\eta)\,e^{+i\mu\eta}\,d\eta,\quad \mu\in\mathbb{R}^+,\\
\label{co-}
(Ux)_{-}(\mu)=&\frac{1}{\sqrt{2\pi}}\int\limits_\mathbb{R} v(\eta)\,e^{-i\mu\eta}\,d\eta,\quad \mu\in\mathbb{R}^+,
\end{align}
\end{subequations}
where
\begin{equation}
\label{dV}
v(\eta)=e^{\eta/2}x(e^{\eta}).
\end{equation}
We have
\begin{equation}
\label{cv}
\int\limits_{\mathbb{R}}|v(\eta)|^2d\eta=
\int\limits_{\mathbb{R}^{+}}|x(\xi)|^2d\xi.
\end{equation}
Put
\begin{equation}
\label{ca}
u(\nu)=
\begin{cases}
(Ux)_{+}(\,\,\nu\,\,)\ &\textup{if}\ \nu\in\mathbb{R}^{+},   \\[1.0ex]
(Ux)_{-}(-\nu)\ &\textup{if}\ \nu\in\mathbb{R}^{-}.
\end{cases}
\end{equation}
It is clear that
\begin{equation}
\label{ic}
\int\limits_{\mathbb{R}}|u(\nu)|^2d\nu=
\int\limits_{\mathbb{R}^{+}}|(Ux)_{+}(\mu)|^2d\mu+
\int\limits_{\mathbb{R}^{+}}|(Ux)_{-}(\mu)|^2d\mu.
\end{equation}
From \eqref{co} and \eqref{ca} it follows that
\begin{equation}
\label{os}
u(\nu)=\frac{1}{\sqrt{2\pi}}
\int\limits_{\mathbb{R}}v(\eta)\,e^{i\nu\eta}\,d\eta,\quad \nu\in\mathbb{R}.
\end{equation}
Thus,
\begin{equation}
\label{hfp}
u(\nu)=(\mathscr{F}v)(\nu),\quad \nu\in\mathbb{R},
\end{equation}
where \(\mathscr{F}\) is the Fourier--Plancherel operator \eqref{FWRA}. The Parceval itentity
\begin{equation}
\label{pe}
\int\limits_{\mathbb{R}}|u(\nu)|^2d\nu=
\int\limits_{\mathbb{R}}|v(\eta)|^2d\eta
\end{equation}
and formulas \eqref{cv} and \eqref{ic} yield~\eqref{paE}.
\end{proof}
It is clear that the set $\mathcal{D}$ is a (nonclosed) vector subspace
 of~\(L^2(\mathbb{R}^{+}\!)\).
\begin{lem}
\label{den}
The set $\mathcal{D}$ is dense in $L^2(\mathbb{R}^{+})$.
\end{lem}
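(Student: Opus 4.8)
The plan is to reduce the density of $\mathcal{D}$ to the density of a smaller, more tractable family: the functions whose support lies strictly inside $\mathbb{R}^{+}$. Lemma \ref{ExU} already tells us that every such function belongs to $\mathcal{D}$, so $\mathcal{D}$ contains this family. Consequently, once I show that functions with support in a fixed compact subinterval of $(0,\infty)$ are dense in $L^2(\mathbb{R}^{+})$, the density of the larger set $\mathcal{D}$ follows at once.

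Concretely, I would fix an arbitrary $x\in L^2(\mathbb{R}^{+})$ and, for each $n\in\mathbb{N}$, define the truncation $x_n=\mathds{1}_{[1/n,\,n]}\cdot x$. The support of $x_n$ is contained in the interval $[1/n,\,n]$, which is a compact subset of the open half-line bounded away from both $0$ and $\infty$. Hence $x_n$ satisfies the hypothesis of Lemma \ref{ExU}, and therefore $x_n\in\mathcal{D}$ for every $n$.

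It then remains to verify that $x_n\to x$ in the norm of $L^2(\mathbb{R}^{+})$, which I would establish by dominated convergence: as $n\to\infty$ the functions $x_n$ converge to $x$ pointwise almost everywhere, while $|x-x_n|^2\le|x|^2$ with the integrable majorant $|x|^2\in L^1(\mathbb{R}^{+})$. Thus
\[
\|x-x_n\|_{L^2(\mathbb{R}^{+})}^{2}
=\int\limits_{\mathbb{R}^{+}}|x(\xi)|^2\,\mathds{1}_{\mathbb{R}^{+}\setminus[1/n,\,n]}(\xi)\,d\xi
\longrightarrow 0\quad\text{as }n\to\infty.
\]
Since each $x_n$ lies in $\mathcal{D}$ and approximates $x$ arbitrarily well in the $L^2$ norm, the subspace $\mathcal{D}$ is dense in $L^2(\mathbb{R}^{+})$. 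I expect essentially no obstacle here: the single point requiring care is confirming that the truncation $x_n$ meets the hypothesis of Lemma \ref{ExU}, and this is immediate once its support is seen to be trapped inside a fixed compact subinterval of $(0,\infty)$.
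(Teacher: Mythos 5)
Your proof is correct and follows essentially the same route as the paper: the paper likewise truncates $x$ to $x_n=\mathds{1}_{[1/n,n]}\cdot x$, invokes Lemma \ref{ExU} to place $x_n$ in $\mathcal{D}$, and concludes by $L^2$ convergence of $x_n$ to $x$. The only difference is that you spell out the convergence step via dominated convergence, which the paper dismisses as ``clearly.''
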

\begin{proof}
Given \(x\in{}L^2(\mathbb{R}^{+})\), we define
\begin{equation}
\label{aps}
x_n(t)=x(t)\cdot\mathds{1}_{[1/n,n]}(t),\quad n=1,2,\dots,
\end{equation}
where \(\mathds{1}_{[1/n,n]}(t)\) is the indicator function of
the interval \([1/n,n]\). (See \eqref{IndF}.)
Clearly,
\begin{equation}
\|x-x_n\|_{L^2(\mathbb{R}^{+})}\to0\quad\textup{as}\quad{}n\to\infty.
\end{equation}
Moreover, \(x_n\in\mathcal{D}\) by Lemma \ref{ExU}.
\end{proof}
\begin{defn}
\label{DeU}
Formula \eqref{paE} means
that the operator~$U$ defined by\break \eqref{FoD}--\eqref{fo}  for \(f\in\mathcal{D}\)  maps
the subspace $\mathcal{D}$ into the model space \(\mathfrak{M}\) isometrically.
Therefore, the operator $U$  extends from
the subspace \(\mathcal{D}\subset{}L^2(\mathbb{R}^{+})\) to its closure \(L^2(\mathbb{R}^{+})\) by continuity:
\begin{equation}
\label{ext}
\begin{split}
\textup{if}\ x\in{}L^2(\mathbb{R}^{+}),\quad x_n\in\mathcal{D},\quad n=1,2,\dots,\quad x&=
\lim\limits_{n\to\infty}x_n, \\
\textup{then}\quad Ux
&\stackrel{\textup{def}}{=}
\lim\limits_{n\to\infty}Ux_n.
\end{split}
\end{equation}
We preserve the notation \(U\) for the
operator extended in this way.
\end{defn}

 \emph{From now on, we deal with the operator $U$ that is already extended from~$\mathcal{D}$ to the whole space $L^2(\mathbb{R}^{+})$ in accordance with~\eqref{ext}.}
It is clear that~\(U\) maps \(L^2(\mathbb{R}^{+})\) onto some closed subspace of the model space~\(\mathfrak{M}\).
 \begin{thm}
\label{on}
The operator \(U\) maps \(L^2(\mathbb{R}^{+})\) onto the whole model space~\(\mathfrak{M}\).
\end{thm}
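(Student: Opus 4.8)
The plan is to exhibit $U$ as a composition of three surjective (in fact unitary) maps, so that its surjectivity onto $\mathfrak{M}$ follows immediately. The factorization can be read off directly from the computation already carried out in the proof of Lemma~\ref{SqU}: the substitution $\xi=e^{\eta}$ turns the integrals \eqref{fo} into the Fourier integrals \eqref{co}, and the repackaging \eqref{ca} assembles the two components into a single function on the line, on which $\mathscr{F}$ acts.

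Concretely, I would introduce three operators. First, $V\colon L^2(\mathbb{R}^{+})\to L^2(\mathbb{R})$ defined by $(Vx)(\eta)=e^{\eta/2}x(e^{\eta})$; identity \eqref{cv} shows that $V$ is isometric, and it is onto because for any $v\in L^2(\mathbb{R})$ the function $x(\xi)=\xi^{-1/2}v(\ln\xi)$ lies in $L^2(\mathbb{R}^{+})$ and satisfies $Vx=v$, so $V$ is unitary. Second, the Fourier--Plancherel operator $\mathscr{F}$ on $L^2(\mathbb{R})$, which is unitary by \eqref{Unit}. Third, $W\colon L^2(\mathbb{R})\to\mathfrak{M}$ sending $u$ to the column whose upper entry is $u|_{\mathbb{R}^{+}}$ and whose lower entry is $\nu\mapsto u(-\nu)$, $\nu\in\mathbb{R}^{+}$; this is exactly the inverse of the gluing \eqref{ca}, and \eqref{ic} shows it is isometric. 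It is onto: given $\varphi=\begin{bmatrix}\varphi_{+}\\\varphi_{-}\end{bmatrix}\in\mathfrak{M}$, the function $u$ equal to $\varphi_{+}(\nu)$ for $\nu>0$ and to $\varphi_{-}(-\nu)$ for $\nu<0$ satisfies $Wu=\varphi$. Hence $W$ is unitary as well.

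The heart of the argument is then the factorization $U=W\mathscr{F}V$. For $x\in\mathcal{D}$ this is verified by direct inspection: $Vx$ is precisely the function $v$ of \eqref{dV}, then $\mathscr{F}Vx$ is the function $u$ of \eqref{hfp} by \eqref{os}, and applying $W$ to it reproduces the two components $(Ux)_{+}$ and $(Ux)_{-}$ via \eqref{ca}. Thus $U$ and $W\mathscr{F}V$ agree on the set $\mathcal{D}$. Since both operators are bounded (each is isometric, hence of norm one) and $\mathcal{D}$ is dense in $L^2(\mathbb{R}^{+})$ by Lemma~\ref{den}, the two bounded operators coincide on all of $L^2(\mathbb{R}^{+})$; here one invokes that the continuous extension of $U$ prescribed in Definition~\ref{DeU} is the unique bounded extension. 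Consequently $U=W\mathscr{F}V$ everywhere, and being a composition of three unitary operators, $U$ is itself unitary from $L^2(\mathbb{R}^{+})$ onto $\mathfrak{M}$; in particular it is onto, which is the assertion of the theorem.

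I expect the only delicate point to be the passage from $\mathcal{D}$ to all of $L^2(\mathbb{R}^{+})$: the explicit integral formulas \eqref{fo} define $U$ literally only on $\mathcal{D}$, so the factorization is pointwise valid only there, and one must be careful that matching two bounded operators on the dense set $\mathcal{D}$ (rather than matching them through the possibly divergent integrals) is what legitimately forces equality on the closure. Everything else is bookkeeping of the unitary changes of variable already present in the proof of Lemma~\ref{SqU}.
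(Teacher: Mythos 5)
Your proof is correct, and it rests on exactly the same three ingredients as the paper's own argument: the exponential change of variables \(\xi=e^{\eta}\), the Plancherel theorem for \(\mathscr{F}\) on \(L^2(\mathbb{R})\), and the gluing of the two half-line components into a single function on \(\mathbb{R}\). The difference is one of logical organization. The paper argues backwards, element by element: given \(y\in\mathfrak{M}\), it glues \(y_{\pm}\) into \(u\) via \eqref{auf}, invokes the Plancherel representation \eqref{l2r} to write \(u=\mathscr{F}v\), and changes variables to produce \(x\) with \(Ux=y\); the delicate point there is that \eqref{rint} is read as ``\(y=Ux\) by the definition of \(U\)'' even though \(x\) need not lie in \(\mathcal{D}\), which is why the paper appends Remark~\ref{Reg} (an approximation procedure giving meaning to \eqref{l2r}). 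You argue forwards, at the level of operators: you establish the factorization \(U=W\mathscr{F}V\) on the dense subspace \(\mathcal{D}\), where all the integrals converge absolutely, and then extend by continuity using Lemma~\ref{den} and the uniqueness of the bounded extension prescribed in Definition~\ref{DeU} --- precisely the delicate point you flagged, and you handle it correctly. What your packaging buys is that the formulas \eqref{fo} never need to be interpreted for functions outside \(\mathcal{D}\), and you obtain the slightly stronger conclusion that \(U\) is unitary (not merely isometric and onto) as an immediate corollary of the factorization into three unitaries. What the paper's version buys is an explicit preimage for each individual \(y\), without introducing the auxiliary operators \(V\) and \(W\). Both proofs ultimately reduce surjectivity of \(U\) to surjectivity of \(\mathscr{F}\), so the mathematical core is shared; yours is the cleaner way to discharge the convergence issue.
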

\begin{proof}
Let \(y=
\displaystyle
\begin{bmatrix}
y_{+}\\
y_{-}
\end{bmatrix}\) be an arbitrary element of~$\mathfrak{M}$.
 Both functions \(y_{+}\) and \(y_{-}\) belong to \(L^2(\mathbb{R}^{+})\). We set
 \begin{equation}
 \label{auf}
 u(\nu)=
 \begin{cases}
 y_{+}(\,\nu\,)\quad &\textup{if}\quad \nu>0,\\
 y_{-}(-\nu)\quad &\textup{if}\quad \nu<0.
 \end{cases}
 \end{equation}
Clearly, $u\in{}L^2(\mathbb{R})$.  As a function of class~$L^2(\mathbb{R})$, the function~$u$ is representable in the form
\begin{equation}
\label{l2r}
u(\nu)=\frac{1}{\sqrt{2\pi}}\int\limits_{\mathbb{R}}v(\eta)e^{i\nu\eta}\,d\eta,\quad \nu\in\mathbb{R},
\end{equation}
where $v$ is a function in $L^2(\mathbb{R})$.

Relation~\eqref{l2r} can be interpreted as the following pair of formulas:
 \begin{subequations}
 \label{int}
 \begin{align}
  \label{int1}
 y_{+}(\mu)=&\frac{1}{\sqrt{2\pi}}\int\limits_{\mathbb{R}}
 v(\eta)e^{\,i\mu\eta\,}\,d\eta,\ \ \mu\in\mathbb{R^+},\\
  \label{int2}
 y_{-}(\mu)=&\frac{1}{\sqrt{2\pi}}\int\limits_{\mathbb{R}}
 v(\eta)e^{-i\mu\eta}\,d\eta,\ \ \mu\in\mathbb{R^+},
 \end{align}
 \end{subequations}
where \(v\in{}L^2(\mathbb{R})\). Changing the variable
\(\xi=e^{\eta}\) in \eqref{int}, we reduce \eqref{int}
to the form
 \begin{subequations}
 \label{rint}
 \begin{align}
  \label{rint1}
 y_{+}(\mu)=&\frac{1}{\sqrt{2\pi}}\int\limits_{\mathbb{R}^{+}}
 x(\xi)\xi^{-1/2}\xi^{\,i\mu\,}\,d\xi,\ \ \mu\in\mathbb{R^+},\\
  \label{rint2}
 y_{-}(\mu)=&\frac{1}{\sqrt{2\pi}}\int\limits_{\mathbb{R}^{+}}
 x(\xi)\xi^{-1/2}\xi^{-i\mu}\,d\xi,\ \ \mu\in\mathbb{R^+},
 \end{align}
 \end{subequations}
 where
 \begin{equation}
 \label{Cv}
 v(\eta)=e^{\eta/2}x(e^{\eta}).
 \end{equation}
 Moreover, \(x\in{}L^2(\mathbb{R}^{+})\):
 \begin{equation}
 \label{cvi}
 \int\limits_{\mathbb{R}^{+}}|x(\xi)|^2\,d\xi=
 \int\limits_{\mathbb{R}}|v(\eta)|^2\,d\eta.
 \end{equation}
By the definition of the operator \(U\),
formulas~\eqref{rint} mean that
 \begin{equation}
 \label{fin}
 y=Ux.
 \end{equation}
 \end{proof}
 \begin{rem}
 \label{Reg}
 The function \(v\) in \eqref{l2r} may fail to belong to \(L^1(\mathbb{R})\). To give a meaning to~\eqref{l2r}, we use the standard approximation procedure. We choose a sequence \(\{v_n\}_{n=1,2,\dots}\) such that
  $$
  v_n\in{}L^2(\mathbb{R})\cap{}L^1(\mathbb{R})
  $$
   for every \(n\) and
  \(\|v_n-v\|_{L^2(\mathbb{R})}\to0\) as \(n\to\infty\).
  The sequence \(\{u_n\}_{n=1,2,\dots}\), where
  $$
  u_n(t)=\frac{1}{\sqrt{2\pi}}
  \int\limits_{\mathbb{R}}v_n(\xi)e^{it\xi}\,d\xi,
  $$
   is well defined and converges to~\(u\):
  \(\|u_n-u\|_{L^2(\mathbb{R})}\to0\) as \(n\to\infty\).
 \end{rem}
 \begin{rem}
 \label{met}
 The transformation~\eqref{fo} \,is none other than the Mellin transform
 $
 \int_{\mathbb{R}^{+}}x(t)\,t^{\zeta-1}dt
 $
 restricted to the line
 $\textup{Re}\,\zeta=\frac{1}{2}:\zeta=\frac{1}{2}+i\mu$, $\mu\in\mathbb{R}$.  For\-mu\-la~\eqref{paE} is the Parceval identity for the Mellin transform, see
 \cite[\S3.17, Theorem 71]{T}. However, we would like to emphasize that we
 view this Mellin transform not as a single function defined for \(\mu\in\mathbb{R}\), but as a pair of functions defined for \(\mu\in\mathbb{R}^{+}\).
 \end{rem}
\section{The model of the truncated Fourier--Plancherel operator.}
\label{MFP}
In \S\ref{MS} we introduced the operator \(U\)
that maps the space \(L^2(\mathbb{R}^+)\) onto the model space
\(\mathfrak{M}\) isometrically.
In this section we calculate the operator
\(U\mathscr{F}_{\mathbb{R}^+}U^{-1}\),
which serves as a model of the operator  \(\mathscr{F}_{\mathbb{R}^+}\).

Let \(x\in{}L^2(\mathbb{R}^+)\) and
\(\displaystyle
\begin{bmatrix}
y_{+}\\
y_{-}
\end{bmatrix}
=Ux\), i.e., let~\eqref{fo} be true.
We would like to express the pair
\(\displaystyle
\begin{bmatrix}
z_{+}\\
z_{-}
\end{bmatrix}
=U\mathscr{F}_{\mathbb{R}^+}x\) in terms of the pair \(\displaystyle
\begin{bmatrix}
y_{+}\\
y_{-}
\end{bmatrix}\).
Substituting the function
$$
\big(\mathscr{F}_{\mathbb{R}^+}x\big)(t)=\frac{1}{\sqrt{2\pi}}
\int\limits_{\mathbb{R}^+}x(\xi)e^{it\xi}\,d\xi
$$
for~$x$ in~\eqref{fo}, we obtain
\begin{subequations}
 \label{sst}
 \begin{align}
  \label{sst1}
 z_{+}(\mu)=&\frac{1}{\sqrt{2\pi}}\int\limits_{\mathbb{R}^{+}}
 \bigg(\frac{1}{\sqrt{2\pi}}
\int\limits_{\mathbb{R}^+}x(\xi)e^{it\xi}\,d\xi\bigg)
t^{-1/2}t^{\,i{\mu}\,}\,dt,\quad \mu\in\mathbb{R^+},\\
  \label{sst2}
 z_{-}(\mu)=&\frac{1}{\sqrt{2\pi}}\int\limits_{\mathbb{R}^{+}}
 \bigg(\frac{1}{\sqrt{2\pi}}
\int\limits_{\mathbb{R}^+}x(\xi)e^{it\xi}\,d\xi\bigg)
t^{-1/2}t^{-i\mu}\,dt,\quad \mu\in\mathbb{R^+}.
 \end{align}
 \end{subequations}
 Changing the order of integration in \eqref{sst}, we arrive at the formulas
 \begin{subequations}
 \label{cst}
 \begin{align}
  \label{cst1}
 z_{+}(\mu)=&\frac{1}{\sqrt{2\pi}}\int\limits_{\mathbb{R}^{+}}x(\xi)
 \bigg(\frac{1}{\sqrt{2\pi}}
\int\limits_{\mathbb{R}^+}e^{it\xi}\,
t^{-1/2}t^{\,i\mu\,}\,dt\bigg)
 d\xi,\quad \mu\in\mathbb{R^+},\\
  \label{cst2}
 z_{-}(\mu)=&\frac{1}{\sqrt{2\pi}}\int\limits_{\mathbb{R}^{+}}x(\xi)
 \bigg(\frac{1}{\sqrt{2\pi}}
\int\limits_{\mathbb{R}^+}e^{it\xi}\,
t^{-1/2}t^{-i\mu}\,dt\bigg)
 d\xi,\quad \mu\in\mathbb{R^+}.
 \end{align}
 \end{subequations}
 Changing \(t\to{}t/\xi\) in \eqref{cst},
 we obtain
 \begin{subequations}
 \label{fst}
 \begin{align}
  \label{fst1}
 z_{+}(\mu)=&\frac{1}{\sqrt{2\pi}}\int\limits_{\mathbb{R}^{+}}x(\xi)\,
 \xi^{-1/2}\xi^{-i\mu}\bigg(\frac{1}{\sqrt{2\pi}}
\int\limits_{\mathbb{R}^+}e^{it}\,
t^{-1/2}t^{\,i\mu\,}\,dt\bigg)
 d\xi,\quad t\in\mathbb{R^+},\\
  \label{fst2}
 z_{-}(\mu)=&\frac{1}{\sqrt{2\pi}}\int\limits_{\mathbb{R}^{+}}x(\xi)\,
 \xi^{-1/2}\xi^{\,i\mu\,}\bigg(\frac{1}{\sqrt{2\pi}}
\int\limits_{\mathbb{R}^+}e^{it}\,
t^{-1/2}t^{-i\mu}\,dt\bigg)
 d\xi,\quad t\in\mathbb{R^+}.
 \end{align}
 \end{subequations}
 The inner integrals in \eqref{fst} do not depend on \(\xi\).
 Calculating these integrals, we can present~\eqref{fst} in the form
  \begin{subequations}
 \label{fft}
 \begin{align}
  \label{fft1}
 z_{+}(\mu)=&F_{+-}(\mu)\,y_{-}(\mu),\quad \mu\in\mathbb{R^+},\\
  \label{fft2}
 z_{-}(\mu)=&F_{-+}(\mu)\,y_{+}(\mu),\quad \mu\in\mathbb{R^+},
 \end{align}
 \end{subequations}
 where
 \begin{subequations}
 \label{ft}
 \begin{align}
\label{ft1}
 F_{+-}(\mu)=&\frac{1}{\sqrt{2\pi}}
\int\limits_{\mathbb{R}^+}e^{it}\,
t^{-1/2}t^{\,i\mu\,}\,dt,\quad
 \mu\in\mathbb{R^+}, \\
 \label{ft2}
 F_{-+}(\mu)=&\frac{1}{\sqrt{2\pi}}
\int\limits_{\mathbb{R}^+}e^{it}\,
t^{-1/2}t^{-i\mu}\,dt,\quad \mu\in\mathbb{R^+}.
 \end{align}
 \end{subequations}
 The functions \(F_{+-}\) and \(F_{-+}\) can be expressed in term of the Euler \(\Gamma\)-function:
 \begin{subequations}
 \label{Ft}
 \begin{align}
 \label{Ft1}
 F_{+-}(\mu)=&\frac{1}{\sqrt{2\pi}}\,e^{i\pi/4}
 e^{-\frac{\pi}{2}\mu}\,\Gamma\big(\tfrac{1}{2}+i\mu\big),\quad
 \mu\in\mathbb{R^+}, \\
  \label{Ft2}
 F_{-+}(\mu)=&\frac{1}{\sqrt{2\pi}}\,e^{i\pi/4}
 e^{\,\,\frac{\pi}{2}\mu}\,\,\Gamma\big(\tfrac{1}{2}-i\mu\big),\quad
 \mu\in\mathbb{R^+}.
 \end{align}
 \end{subequations}
  We shall not justify the possibility of changing the order of integration
 in \eqref{sst}.  The above argument, which leads from
 \eqref{sst} to \eqref{fft}, plays a heuristic role.
 Actually, we establish formulas \eqref{fft}, where the functions \(F_{+-}(\mu)\) and \(F_{-+}(\mu)\)
 are of the form \eqref{Ft}, in a different way.

 The pair of identities~\eqref{fft} can be presented in the
 matrix form
 \begin{equation}
 \label{meq}
 (U\mathscr{F}_{\mathbb{R}^{+}}x)(\mu)=
 F(\mu)(Ux)(\mu),\quad \mu\in\mathbb{R}^{+},
 \end{equation}
 where  \(F(\mu)\) is a $(2\times2)$-matrix:
  \begin{equation}
 \label{mF}
 F(\mu)=\begin{bmatrix}
0&F_{+-}(\mu)\\
F_{-+}(\mu)&0
\end{bmatrix}
\ccomma
\quad\mbox{for all }\,\mu\in\mathbb{\mathbb{R}^{+}}.
 \end{equation}
  \begin{thm}
 \label{MaTe}
 Let \(x\) be an arbitrary function in~\(L^2(\mathbb{R}^{+})\) and \(\mathscr{F}_{\mathbb{R}^{+}}x\) the truncated Fourier--Plancherel transform of \(x\). Then their images \(Ux\) and \(U\mathscr{F}_{\mathbb{R}^{+}}x\) under the operator \(U\) are related by formulas~\eqref{meq}, where the entries
  \(F_{+-}(\mu)\) and \(F_{-+}(\mu)\) of the matrix \(F(\mu)\)
 are of the form \eqref{Ft}.
 \end{thm}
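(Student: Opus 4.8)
The plan is to prove the operator identity $U\mathscr{F}_{\mathbb{R}^{+}}=MU$, where $M$ is the operator of multiplication by the matrix \eqref{mF} whose entries are \emph{defined} to be the functions \eqref{Ft}. Since $|\Gamma(\tfrac12+i\mu)|^2=\pi/\cosh(\pi\mu)$, these entries lie in $L^\infty(\mathbb{R}^{+})$, so $M$ is a bounded operator on $\mathfrak{M}$; as $U$ and $\mathscr{F}_{\mathbb{R}^{+}}$ are bounded as well, both sides are bounded operators from $L^2(\mathbb{R}^{+})$ to $\mathfrak{M}$, and it suffices to verify the identity after transporting it to $L^2(\mathbb{R})$. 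By the proof of Lemma~\ref{SqU}, $U=J\mathscr{F}V$, where $V\colon L^2(\mathbb{R}^{+})\to L^2(\mathbb{R})$, $(Vx)(\eta)=e^{\eta/2}x(e^\eta)$, and $J\colon L^2(\mathbb{R})\to\mathfrak{M}$, $(Ju)(\mu)=\bigl[\,u(\mu),\,u(-\mu)\,\bigr]^{\mathsf T}$ for $\mu>0$, are both unitary. A short computation then reduces the whole assertion to a single statement about the Fourier transform on $\mathbb{R}$: that conjugating $\mathscr{F}_{\mathbb{R}^{+}}$ by $V$ turns it into the reflection $\eta\mapsto-\eta$ followed by convolution with the kernel $k(s)=\tfrac{1}{\sqrt{2\pi}}e^{s/2}e^{ie^s}$, and that this convolution is the Fourier multiplier with symbol $m(\nu)=\sqrt{2\pi}\,\widehat k(\nu)=F_{+-}(\nu)$ (so that $F_{-+}(\nu)=m(-\nu)$).

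The main obstacle is exactly the one the heuristic passage \eqref{sst}$\to$\eqref{fft} glosses over: the kernel $k$ grows like $e^{s/2}$ as $s\to+\infty$, so $k\notin L^1(\mathbb{R})\cup L^2(\mathbb{R})$, the convolution theorem does not apply directly, and the integral \eqref{ft1} defining the symbol converges only conditionally. I would remove this difficulty by damping. For $\epsilon>0$ put $k_\epsilon(s)=\tfrac{1}{\sqrt{2\pi}}e^{s/2}e^{(i-\epsilon)e^s}$, which now lies in $L^1(\mathbb{R})$; on the operator side this corresponds to $(\mathscr{F}_{\mathbb{R}^{+}}^{\,\epsilon}x)(t)=\tfrac{1}{\sqrt{2\pi}}\int_{\mathbb{R}^{+}}x(\xi)e^{(i-\epsilon)t\xi}\,d\xi$, an absolutely convergent integral for every $x\in L^2(\mathbb{R}^{+})$ and every $t$ by Cauchy--Schwarz. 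The change of variables $\xi=e^\eta$ now gives, with all integrals absolutely convergent, the honest identity $V\mathscr{F}_{\mathbb{R}^{+}}^{\,\epsilon}V^{-1}v=k_\epsilon*(\Lambda v)$, where $\Lambda$ is reflection; since $k_\epsilon\in L^1$ and $\mathscr{F}\Lambda=\Lambda\mathscr{F}$, the convolution theorem yields the genuine operator identity $\mathscr{F}V\mathscr{F}_{\mathbb{R}^{+}}^{\,\epsilon}V^{-1}\mathscr{F}^{-1}=M_{m_\epsilon}\Lambda$ with $m_\epsilon(\nu)=\sqrt{2\pi}\,\widehat{k_\epsilon}(\nu)=\tfrac{1}{\sqrt{2\pi}}\int_0^\infty e^{(i-\epsilon)t}t^{-1/2+i\nu}\,dt$. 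This last integral is absolutely convergent and equals $\tfrac{1}{\sqrt{2\pi}}\Gamma(\tfrac12+i\nu)(\epsilon-i)^{-1/2-i\nu}$ by the standard formula $\int_0^\infty t^{s-1}e^{-zt}\,dt=\Gamma(s)z^{-s}$ with $z=\epsilon-i$, $s=\tfrac12+i\nu$. Unwinding through $J$ gives the clean operator identity $U\mathscr{F}_{\mathbb{R}^{+}}^{\,\epsilon}U^{-1}=M_\epsilon$, where $M_\epsilon$ is multiplication by the off-diagonal matrix with entries $m_\epsilon(\mu)$ and $m_\epsilon(-\mu)$.

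It then remains to let $\epsilon\downarrow0$ in this honest identity. On the right, $(\epsilon-i)^{-1/2-i\nu}\to(-i)^{-1/2-i\nu}=e^{i\pi/4}e^{-\pi\nu/2}$, so $m_\epsilon(\nu)\to F_{+-}(\nu)$ pointwise; together with a uniform bound $\sup_{0<\epsilon\le1}\|m_\epsilon\|_{L^\infty}<\infty$ (again via $|\Gamma(\tfrac12+i\nu)|^2=\pi/\cosh\pi\nu$) this gives $M_\epsilon\to M$ strongly by dominated convergence. On the left I would show $\mathscr{F}_{\mathbb{R}^{+}}^{\,\epsilon}\to\mathscr{F}_{\mathbb{R}^{+}}$ strongly: for $x\in C_c^\infty(\mathbb{R}^{+})$ the integrand converges pointwise and, after integrating by parts twice in $\xi$, one has $|(\mathscr{F}_{\mathbb{R}^{+}}^{\,\epsilon}x)(t)|\le\min(C_0,C_2 t^{-2})$ uniformly in $\epsilon\in(0,1]$, so dominated convergence gives $L^2$-convergence on this dense class, which extends to all of $L^2(\mathbb{R}^{+})$ because $\sup_{\epsilon}\|\mathscr{F}_{\mathbb{R}^{+}}^{\,\epsilon}\|\le\sup_\epsilon\|m_\epsilon\|_{L^\infty}<\infty$. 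Passing to the strong limit on both sides of $U\mathscr{F}_{\mathbb{R}^{+}}^{\,\epsilon}U^{-1}=M_\epsilon$ yields $U\mathscr{F}_{\mathbb{R}^{+}}U^{-1}=M$, i.e.\ precisely \eqref{meq} with entries \eqref{Ft}. The only genuinely delicate points are the non-integrability of $k$ (resolved by the damping $k_\epsilon$) and the verification of strong convergence $\mathscr{F}_{\mathbb{R}^{+}}^{\,\epsilon}\to\mathscr{F}_{\mathbb{R}^{+}}$; everything else is bookkeeping with unitaries and the Gamma integral.
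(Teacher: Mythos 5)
Your proof is correct, but it takes a genuinely different route from the paper's. The paper proves Theorem~\ref{MaTe} by approximating the \emph{vector}: it verifies \eqref{fft} only for the exponentials \(e_a(t)=e^{-at}\), \(a>0\), whose linear hull is dense in \(L^2(\mathbb{R}^{+})\), computing all three of \(\mathscr{F}_{\mathbb{R}^{+}}e_a\), \(Ue_a\), and \(U\mathscr{F}_{\mathbb{R}^{+}}e_a\) in closed form (formulas \eqref{fea}, \eqref{uea}, \eqref{uFea}) and matching them by means of the reflection formula \eqref{Refl}; the extension to all of \(L^2(\mathbb{R}^{+})\) is then the trivial remark that two bounded operators agreeing on a dense set coincide. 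You instead approximate the \emph{operator}: you damp \(e^{it\xi}\) to \(e^{(i-\epsilon)t\xi}\), conjugate by the unitaries \(V\), \(\mathscr{F}\), \(J\) implicit in the proof of Lemma~\ref{SqU} so that the damped operator becomes an honest \(L^1\)-convolution composed with reflection, identify its Fourier multiplier by the absolutely convergent Gamma integral \(\int_0^\infty t^{s-1}e^{-zt}\,dt=\Gamma(s)z^{-s}\) with \(z=\epsilon-i\), and pass to the strong limit \(\epsilon\downarrow0\) on both sides. Both proofs ultimately rest on the same special-function facts (the Gamma integral and the behavior of \(\Gamma(\tfrac12\pm i\mu)\)), but deploy them differently. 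What the paper's route buys is brevity and concreteness, though its formulas \eqref{uFea} — that is, \(U\) applied to \(\tfrac{1}{\sqrt{2\pi}}\tfrac{1}{a-it}\) — are stated without derivation, and justifying them needs a contour-rotation argument of exactly the kind your damping makes explicit. What your route buys is that it rigorously legitimizes the heuristic interchange of integration in \eqref{sst}\(\to\)\eqref{fft}, which the paper explicitly declines to justify, and it exposes the structural reason the model is a multiplication operator (conjugation turns \(\mathscr{F}_{\mathbb{R}^{+}}\) into reflection followed by convolution); the price is the extra bookkeeping of uniform multiplier bounds and the strong convergence \(\mathscr{F}_{\mathbb{R}^{+}}^{\,\epsilon}\to\mathscr{F}_{\mathbb{R}^{+}}\), all of which you handle correctly.
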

 \begin{proof} It suffices to verify  identities \eqref{fft} only for \(x\) of the form
 \(x(t)=e_{a}(t)\), where
 \begin{equation}
 \label{ea}
 e_{a}(t)=e^{-at},\quad t\in\mathbb{R}^{+}
 \end{equation}
 and \(a\) is an arbitrary positive number. It is well known that the linear hull of the family of function \(\{e_a(t)\}_{0<a<\infty}\) is a dense set in \(L^2(\mathbb{R}^{+})\). The function \((\mathscr{F}_{\mathbb{R}^{+}}e_a)(t)\) can be calculated explicitly:
 \begin{equation}
 \label{fea}
 (\mathscr{F}_{\mathbb{R}^{+}}e_a)(t)=\frac{1}{\sqrt{2\pi}}\cdot\frac{1}{a-it}.
 \end{equation}
 The corresponding elements \(\displaystyle
\begin{bmatrix}
y_{+}\\
y_{-}
\end{bmatrix}
=Ue_a\) and
\(\displaystyle
\begin{bmatrix}
z_{+}\\
z_{-}
\end{bmatrix}
=(U\mathscr{F}_{\mathbb{R}^{+}})e_a\) can also be calculated explicitly. By the definition \eqref{FoD}--\eqref{fo} of the operator~$U$, we have
\begin{subequations}
 \label{uea}
 \begin{align}
  \label{uea1}
 y_{+}(\mu)=&a^{-\frac{1}{2}-i\mu}\,\Gamma\big(\tfrac{1}{2}+i\mu\big),\quad \mu\in\mathbb{R^+},\\
  \label{uea2}
 y_{-}(\mu)=&a^{-\frac{1}{2}+i\mu}\,\Gamma\big(\tfrac{1}{2}-i\mu\big),\quad \mu\in\mathbb{R^+},
 \end{align}
 \end{subequations}
 and
 \begin{subequations}
 \label{uFea}
 \begin{align}
  \label{uFea1}
 z_{+}(\mu)=&\sqrt{\frac{\pi}{2}}\,e^{i\frac{\pi}{4}}\,a^{-\frac{1}{2}+i\mu}\,
 \frac{e^{-\frac{\pi}{2}\mu}}{\cosh\pi\mu},\quad \mu\in\mathbb{R^+},\\
  \label{uFea2}
 z_{-}(\mu)=&\sqrt{\frac{\pi}{2}}\,e^{i\frac{\pi}{4}}\,a^{-\frac{1}{2}-i\mu}\,
\frac{e^{\frac{\pi}{2}\mu}}{\cosh\pi\mu}, \quad \mu\in\mathbb{R^+}.
 \end{align}
 \end{subequations}
Relations \eqref{fft} follow from \eqref{uea}, \eqref{uFea}, \eqref{Ft}, and the identity%
 \footnote{
 This is a special case of the identity \(\Gamma(\zeta)\cdot\Gamma(1-\zeta)=\frac{\pi}{\sin\pi\zeta},
 \quad\zeta\in\mathbb{C}\setminus\mathbb{Z}\).
 }
 \begin{equation}
\label{Refl}
\Gamma(1/2+{}i\mu)\,\Gamma(1/2-{}i\mu)=\frac{\pi}{\cosh{\pi\mu}}\,\cdot
\end{equation}
 \end{proof}
If  \begin{math}
 \label{ma}
 M=\begin{bmatrix}
M_{++}&M_{+-}\\
M_{-+}&M_{--}
\end{bmatrix}
 \end{math}
 is a $(2\times2)$-matrix with complex entries, then\break
 \(\|M\|_{\mathbb{C}^2\to\mathbb{C}^2}\) is the norm of the matrix \(M\) viewed as an operator in the space~\(\mathbb{C}^2\), where the space \(\mathbb{C}^2\) is equipped
 with the standard Hermitian norm.
 \begin{defn}
 Let \(M(\mu)=\begin{bmatrix}
M_{++}(\mu)&M_{+-}(\mu)\\
M_{-+}(\mu)&M_{--}(\mu)
\end{bmatrix}\)
be a $(2\times2)$-matrix-valued function of $\mu\in\mathbb{R}^{+}$ whose entries are complex valued functions defined almost everywhere. \emph{The multiplication
operator \(\mathcal{M}_M\) generated by the matrix function~\(M\)} is defined by the formula
\begin{equation}
\label{dmo}
(\mathcal{M}_My)(\mu)=M(\mu)y(\mu),\quad
y=\begin{bmatrix}
y_{+}\\
y_{-}
\end{bmatrix}
\in\,\mathfrak{M}.
\end{equation}
\end{defn}
\begin{lem}
\label{enmo}
If the matrix function \(M(\mu)\) is bounded on \(\mathbb{R}^{+}\), i.e.,
$$
\underset{\mu\in\mathbb{R}^{+}}{\textup{ess\,sup}}
\|M(\mu)\|_{\mathbb{C}^{2}\to\mathbb{C}^{2}}<\infty,
$$
then $\mathcal{M}_M$ is a bounded operator
in the space $\mathfrak{M}$, and
\begin{equation}
\label{eqn}
\|\mathcal{M}_M\|_{\mathfrak{M}\to\mathfrak{M}}=
\underset{\mu\in\mathbb{R}^{+}}{\textup{ess\,sup}}\,
\|M(\mu)\|_{\mathbb{C}^{2}\to\mathbb{C}^{2}}.
\end{equation}
\end{lem}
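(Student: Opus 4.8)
The plan is to prove the two inequalities $\|\mathcal{M}_M\|_{\mathfrak{M}\to\mathfrak{M}}\le N$ and $\|\mathcal{M}_M\|_{\mathfrak{M}\to\mathfrak{M}}\ge N$, where $N=\operatorname{ess\,sup}_{\mu\in\mathbb{R}^{+}}\|M(\mu)\|_{\mathbb{C}^{2}\to\mathbb{C}^{2}}$. The starting observation is that, by~\eqref{No}, the norm on $\mathfrak{M}$ is an integrated $\mathbb{C}^{2}$-norm: writing $\varphi(\mu)=\begin{bmatrix}\varphi_{+}(\mu)\\\varphi_{-}(\mu)\end{bmatrix}$ and regarding it as a vector in $\mathbb{C}^{2}$ with the standard Hermitian norm, one has $\|\varphi\|_{\mathfrak{M}}^{2}=\int_{\mathbb{R}^{+}}\|\varphi(\mu)\|_{\mathbb{C}^{2}}^{2}\,d\mu$. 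Thus the whole statement becomes a pointwise-to-integrated comparison.

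The upper bound is the easy half. For $y\in\mathfrak{M}$ I would estimate pointwise $\|M(\mu)y(\mu)\|_{\mathbb{C}^{2}}\le\|M(\mu)\|_{\mathbb{C}^{2}\to\mathbb{C}^{2}}\,\|y(\mu)\|_{\mathbb{C}^{2}}\le N\,\|y(\mu)\|_{\mathbb{C}^{2}}$ for almost every $\mu$, then square and integrate to obtain $\|\mathcal{M}_{M}y\|_{\mathfrak{M}}^{2}\le N^{2}\|y\|_{\mathfrak{M}}^{2}$. In particular $\mathcal{M}_{M}$ is bounded and $\|\mathcal{M}_{M}\|_{\mathfrak{M}\to\mathfrak{M}}\le N$.

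For the lower bound I would construct near-extremal test functions, and here lies the main obstacle: \emph{measurability}. For almost every $\mu$ one can pick a unit vector realizing nearly the operator norm of $M(\mu)$, but the resulting vector field $\mu\mapsto v(\mu)$ need not be measurable a priori. I would circumvent this by fixing a countable dense subset $\{v_{k}\}_{k\in\mathbb{N}}$ of the unit sphere of $\mathbb{C}^{2}$. Since the entries of $M$ are measurable, each $\mu\mapsto\|M(\mu)v_{k}\|_{\mathbb{C}^{2}}$ is measurable, and by density $\|M(\mu)\|_{\mathbb{C}^{2}\to\mathbb{C}^{2}}=\sup_{k}\|M(\mu)v_{k}\|_{\mathbb{C}^{2}}$; hence $\mu\mapsto\|M(\mu)\|_{\mathbb{C}^{2}\to\mathbb{C}^{2}}$ is measurable and $N$ is well defined. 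Now fix $\varepsilon>0$. By definition of the essential supremum the set $\{\mu:\|M(\mu)\|_{\mathbb{C}^{2}\to\mathbb{C}^{2}}>N-\varepsilon\}$ has positive measure, and it coincides with $\bigcup_{k}\{\mu:\|M(\mu)v_{k}\|_{\mathbb{C}^{2}}>N-\varepsilon\}$, so at least one index $k_{0}$ yields a set of positive measure. Choosing a subset $S$ of it with $0<|S|<\infty$ and setting $y(\mu)=\mathds{1}_{S}(\mu)\,v_{k_{0}}$ gives $y\in\mathfrak{M}$ with $\|y\|_{\mathfrak{M}}^{2}=|S|$ and $\|\mathcal{M}_{M}y\|_{\mathfrak{M}}^{2}=\int_{S}\|M(\mu)v_{k_{0}}\|_{\mathbb{C}^{2}}^{2}\,d\mu\ge(N-\varepsilon)^{2}|S|$, whence $\|\mathcal{M}_{M}\|_{\mathfrak{M}\to\mathfrak{M}}\ge N-\varepsilon$. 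Letting $\varepsilon\to0$ and combining with the upper bound yields the equality~\eqref{eqn}.
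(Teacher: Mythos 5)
Your proof is correct, but there is nothing in the paper to compare it against: the paper states Lemma~\ref{enmo} without any proof, treating it as a standard fact about multiplication operators (the only citation given is to the literature for the notion of $\textup{ess\,sup}$). What you have done is supply the missing textbook argument, and you have done so completely. The upper bound is the routine pointwise estimate followed by integration. The lower bound is where the only genuine subtlety lives, and you handle it properly: a pointwise choice of near-maximizing unit vectors $v(\mu)$ need not be measurable, so you replace it by a fixed countable dense subset $\{v_k\}$ of the unit sphere of $\mathbb{C}^2$, which simultaneously shows that $\mu\mapsto\|M(\mu)\|_{\mathbb{C}^2\to\mathbb{C}^2}=\sup_k\|M(\mu)v_k\|_{\mathbb{C}^2}$ is measurable (so that $N$ is well defined) and that the positive-measure set $\{\mu:\|M(\mu)\|_{\mathbb{C}^2\to\mathbb{C}^2}>N-\varepsilon\}=\bigcup_k\{\mu:\|M(\mu)v_k\|_{\mathbb{C}^2}>N-\varepsilon\}$ contains a positive-measure piece on which a single \emph{constant} vector $v_{k_0}$ nearly realizes the norm; the test element $y=\mathds{1}_S\,v_{k_0}$ with $0<|S|<\infty$ then gives $\|\mathcal{M}_M\|_{\mathfrak{M}\to\mathfrak{M}}\ge N-\varepsilon$. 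One cosmetic remark: when $N-\varepsilon<0$ the final inequality is trivial, so you may assume $\varepsilon<N$ without loss of generality; and it is worth saying explicitly that measurability of $\mu\mapsto M(\mu)y(\mu)$ for $y\in\mathfrak{M}$ follows from measurability of the entries of $M$, so that $\mathcal{M}_M y$ is indeed an element of $\mathfrak{M}$ once the norm bound is in hand. Neither point affects the validity of your argument.
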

The expression on the left-hand side of \eqref{eqn} means
the norm of the multiplication operator \(\mathcal{M}_M\) in the space
\(\mathfrak{M}\).
Concerning the notion of $\textup{ess\,sup}$ see
\cite[\S2.11, p.~140.]{Bo}
\begin{rem}
\label{cont}
If the matrix function \(M(\mu)\) is continuous on \(\mathbb{R}^{+}\), then
\begin{equation}
\label{eqnC}
\|\mathcal{M}_M\|_{\mathfrak{M}\to\mathfrak{M}}=
\underset{\mu\in\mathbb{R}^{+}}{\textup{\,sup}}\,
\|M(\mu)\|_{\mathbb{C}^{2}\to\mathbb{C}^{2}}.
\end{equation}
\end{rem}

 Since \(x\in{}L^2(\mathbb{R}^{+})\) in \eqref{meq} is arbitrary,
 we can interpret~\eqref{meq} as an identity of operators. The following theorem is the core of the present
 paper.
 \begin{thm}
 \label{mt}
 The truncated Fourier--Plancherel operator \(\mathscr{F}_{\mathbb{R}^{+}}\) is unitarily equivalent to the multiplication operator \(\mathcal{M}_F\) generated by the matrix
 function~$F$ of the form \eqref{mF}--\eqref{Ft}
 in the space \(\mathfrak{M}\). We have
 \begin{equation}
 \label{UE}
\mathscr{F}_{\mathbb{R}^{+}}=U^{-1}\mathcal{M}_F\, U,
 \end{equation}
where \(U\) is the unitary operator described in Definition~{\rm\ref{DeU}}.
 \end{thm}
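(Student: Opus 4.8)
The plan is to assemble Theorem~\ref{mt} from the three structural facts already in hand: the surjectivity of $U$ (Theorem~\ref{on}), its isometry (Lemma~\ref{SqU} together with the extension in Definition~\ref{DeU}), and the intertwining identity \eqref{meq} (Theorem~\ref{MaTe}). The only genuine verification left is that the multiplication operator $\mathcal{M}_F$ is bounded, which is what makes \eqref{meq} legitimate as an identity of operators.

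First I would record that $U$ is \emph{unitary}. By Lemma~\ref{SqU} the operator $U$ preserves norms on the dense set $\mathcal{D}$, hence its continuous extension preserves norms on all of $L^2(\mathbb{R}^{+})$; by polarization it then preserves inner products, so $U$ is an isometry of $L^2(\mathbb{R}^{+})$ into $\mathfrak{M}$. Theorem~\ref{on} asserts that this isometry is onto. A surjective isometry between Hilbert spaces is unitary, so $U^{-1}$ exists and coincides with $U^{\ast}$.

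Second, I would check that $\mathcal{M}_F$ is a bounded operator on $\mathfrak{M}$. Since $F(\mu)$ is off-diagonal, its spectral norm is $\|F(\mu)\|_{\mathbb{C}^2\to\mathbb{C}^2}=\max\{|F_{+-}(\mu)|,|F_{-+}(\mu)|\}$. Using \eqref{Ft} and the reflection formula \eqref{Refl} in the form $|\Gamma(\tfrac12+i\mu)|^{2}=\pi/\cosh\pi\mu$, one computes
\begin{equation*}
|F_{+-}(\mu)|^{2}=\frac{1}{1+e^{2\pi\mu}},\qquad
|F_{-+}(\mu)|^{2}=\frac{1}{1+e^{-2\pi\mu}},
\end{equation*}
so that for every $\mu\in\mathbb{R}^{+}$ one has $\|F(\mu)\|_{\mathbb{C}^2\to\mathbb{C}^2}=(1+e^{-2\pi\mu})^{-1/2}<1$, while $\sup_{\mu}\|F(\mu)\|_{\mathbb{C}^2\to\mathbb{C}^2}=1$. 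Hence $F$ is bounded and, by Lemma~\ref{enmo}, $\mathcal{M}_F$ is a bounded operator in $\mathfrak{M}$ with $\|\mathcal{M}_F\|_{\mathfrak{M}\to\mathfrak{M}}=1$; in passing this also settles \eqref{NorTrF}.

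Finally I would convert the pointwise relation into the operator identity \eqref{UE}. By Theorem~\ref{MaTe}, for every $x\in L^2(\mathbb{R}^{+})$ the equality $(U\mathscr{F}_{\mathbb{R}^{+}}x)(\mu)=F(\mu)(Ux)(\mu)$ holds for almost every $\mu\in\mathbb{R}^{+}$; since both $U\mathscr{F}_{\mathbb{R}^{+}}x$ and $\mathcal{M}_F Ux$ lie in $\mathfrak{M}$, this is the vector identity $U\mathscr{F}_{\mathbb{R}^{+}}x=\mathcal{M}_F Ux$. As $x$ is arbitrary, $U\mathscr{F}_{\mathbb{R}^{+}}=\mathcal{M}_F U$, and composing on the right with $U^{-1}$ gives $\mathscr{F}_{\mathbb{R}^{+}}=U^{-1}\mathcal{M}_F U$, which is \eqref{UE}; since $U$ is unitary, this exhibits the asserted unitary equivalence. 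No step here is a serious obstacle, the substantive work having been carried out in Theorems~\ref{on} and~\ref{MaTe}; the one point demanding care is the boundedness of $\mathcal{M}_F$, which reduces via \eqref{Refl} to the elementary estimate $\|F(\mu)\|_{\mathbb{C}^2\to\mathbb{C}^2}<1$.
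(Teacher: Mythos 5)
Your proof is correct and takes essentially the same route as the paper: there the theorem is stated as the immediate consequence of the intertwining identity \eqref{meq} for arbitrary $x\in L^2(\mathbb{R}^{+})$ (Theorem~\ref{MaTe}) combined with the unitarity of $U$ (Lemma~\ref{SqU}, Definition~\ref{DeU}, Theorem~\ref{on}), which is exactly how you assemble it. Your added verification that $\mathcal{M}_F$ is bounded, via $\|F(\mu)\|_{\mathbb{C}^2\to\mathbb{C}^2}=(1+e^{-2\pi\mu})^{-1/2}<1$, reproduces the computation the paper performs just after the theorem (equations \eqref{av} and the norm formula for $F$), so it is a sound, if slightly reordered, inclusion rather than a different approach.
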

 \begin{rem}
 \label{mop}
 The multiplication operator \(\mathcal{M}_F\) possesses the same spectral properties as the operator~$\mathscr{F}_{\mathbb{R}^{+}}$. However, to study~$\mathcal{M}_F$ is much easier than
 to study~$\mathscr{F}_{\mathbb{R}^{+}}$. By the \emph{model
 operator} we mean~$\mathcal{M}_F$.
 \end{rem}
\section{The spectrum and the resolvent of the operator
\(\boldsymbol{\mathscr{F}_{\mathbb{R}^{+}}}\).
\label{SpARes}}
The unitary equivalence \eqref{UE} allows us to reduce the spectral analysis of
the operator \(\mathscr{F}_{\!_{\scriptstyle{\mathbb{R}^{+}}}}:\,L^2(\mathbb{R}^{+})\to{}L^2(\mathbb{R}^{+})\)
to that of the operator \(\mathcal{M}_{_{\scriptstyle F}}:\,\mathfrak{M}\to\mathfrak{M}\).

To perform the spectral analysis of the \emph{operator} \(\mathcal{M}_{_{\scriptstyle F}}\)
acting in the \emph{in\-fi\-ni\-te-dimen\-si\-o\-nal} space~\(\mathfrak{M}\),
 we need to perform the spectral analysis of the \break $(2\times2)$-mat\-rix \(F(\mu)\) acting in the \emph{two-dimensional} space \(\mathbb{C}^2\). The spectral analysis of the matrix \(F(\mu)\) can be done
for \emph{each} \(\mu\in\mathbb{R}^{+}\) separately. Then we can \emph{glue} the spectrum
\(\textup{\large\(\sigma\)}(\mathcal{M}_{F})\)  of~\(\mathcal{M}_{_{\scriptstyle F}}\) from the spectra \(\textup{\large\(\sigma\)}(F(\mu))\) of the matrices \(F(\mu)\),
 and  the resolvent of~\(\mathcal{M}_{_{\scriptstyle F}}\) can be glued from the resolvents of the matrices  \(F(\mu)\).

For \(\mu\in[0,\infty)\), let
\begin{equation}
\label{EiVa}
\zeta(\mu)=e^{i\pi/4}\frac{1}{\sqrt{2\,\cosh\pi\mu}},
\end{equation}
and let
\begin{equation}
\label{EiV}
\zeta_{+}(\mu)=\zeta(\mu),\quad \zeta_{-}(\mu)=-\zeta(\mu).
\end{equation}
It is clear that \(\zeta(\mu)\not=0\), so that \(\zeta_{+}(\mu)\not=\zeta_{-}(\mu)\) for every \(\mu\in[0,\infty)\).
\begin{lem}{ \ }
\label{Spmm}
For \(\mu\in[0,\infty)\), the spectrum
 \(\textup{\large\(\sigma\)}(F(\mu))\) of the matrix \(F(\mu)\) given by~\eqref{mF} is simple
and consists of two different points \(\zeta_{+}(\mu)\) and \(\zeta_{-}(\mu)\), see~\eqref{EiV} and~\eqref{EiVa}{\rm:}
\begin{equation}
\label{SpMM}
\textup{\large\(\sigma\)}(F(\mu))=\{\zeta_{+}(\mu)\,,\zeta_{-}(\mu)\}.
\end{equation}
\end{lem}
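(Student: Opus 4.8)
The plan is to exploit the anti-diagonal structure of $F(\mu)$. Since the matrix in \eqref{mF} has vanishing diagonal, its characteristic polynomial is
$$
\det\big(\lambda I-F(\mu)\big)=\lambda^2-F_{+-}(\mu)\,F_{-+}(\mu),
$$
so the two eigenvalues of $F(\mu)$ are precisely the two square roots $\pm\sqrt{F_{+-}(\mu)\,F_{-+}(\mu)}$. Thus the whole statement reduces to computing the scalar product $F_{+-}(\mu)\,F_{-+}(\mu)$ and identifying its square roots with $\zeta_{+}(\mu)$ and $\zeta_{-}(\mu)$.

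I would then carry out that product using the closed forms \eqref{Ft1} and \eqref{Ft2}. Upon multiplying, the factors $e^{-\frac{\pi}{2}\mu}$ and $e^{\frac{\pi}{2}\mu}$ cancel, the two copies of $e^{i\pi/4}$ combine into $e^{i\pi/2}=i$, and the Euler factors combine into $\Gamma\big(\tfrac12+i\mu\big)\Gamma\big(\tfrac12-i\mu\big)$. Invoking the reflection identity \eqref{Refl}, this last product equals $\pi/\cosh\pi\mu$, and collecting the remaining constant $1/(2\pi)$ yields
$$
F_{+-}(\mu)\,F_{-+}(\mu)=\frac{i}{2\cosh\pi\mu}.
$$

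It remains to extract the square roots and check distinctness. Writing $i=e^{i\pi/2}$ and choosing the branch with $\sqrt{i}=e^{i\pi/4}$, the two square roots of the product are $\pm\,e^{i\pi/4}(2\cosh\pi\mu)^{-1/2}=\pm\,\zeta(\mu)$, which are exactly the points $\zeta_{+}(\mu)$ and $\zeta_{-}(\mu)$ defined in \eqref{EiVa}–\eqref{EiV}; equivalently, one verifies directly that $\zeta(\mu)^2=F_{+-}(\mu)\,F_{-+}(\mu)$. Finally, since $\cosh\pi\mu\geq 1$ for every real $\mu$, the number $\zeta(\mu)$ never vanishes, so $\zeta_{+}(\mu)\neq\zeta_{-}(\mu)$; the $2\times2$ matrix $F(\mu)$ therefore has two distinct eigenvalues and hence simple spectrum, which establishes \eqref{SpMM}. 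The computation is entirely routine, and the only point demanding a little care is the bookkeeping of the phase factors together with the consistent choice of the branch of the square root so that $\sqrt{i}=e^{i\pi/4}$ matches the normalization in \eqref{EiVa}; no genuine obstacle arises.
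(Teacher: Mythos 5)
Your proof is correct and follows essentially the same route as the paper: exploit the anti-diagonal structure to get the characteristic polynomial $z^2 - F_{+-}(\mu)F_{-+}(\mu)$, then compute the product via \eqref{Ft} and the reflection identity \eqref{Refl} to obtain $\frac{i}{2\cosh\pi\mu}$. In fact you are slightly more thorough than the paper, which stops at the formula $D(z,\mu)=z^2-\frac{i}{2\cosh\pi\mu}$ and leaves the extraction of the square roots and the distinctness of $\zeta_{+}(\mu)$ and $\zeta_{-}(\mu)$ implicit.
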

\begin{proof}
Let \(D(z,\mu)\) be the determinant of that matrix:
\begin{equation}
\label{detm}
D(z,\mu)=\det(zI-F(\mu)).
\end{equation}
The structure \eqref{mF} of~\(F(\mu)\) shows that
\begin{equation}
\label{di}
D(z,\mu)=z^2-F_{+-}(\mu)\cdot{}F_{-+}(\mu).
\end{equation}
The product \(F_{+-}(\mu)\cdot{}F_{-+}(\mu)\) can be calculated by using \eqref{Ft} and \eqref{Refl}:
$$
F_{+-}(\mu)~\cdot{}~F_{-+}(\mu)=\frac{i}{2\,\cosh\pi\mu}.
$$
 Thus,
\begin{equation}
\label{prdi}
D(z,\mu)=z^2-\frac{i}{2\,\cosh\pi\mu}.
\end{equation}
\end{proof}
\begin{defn}
\label{dein}
Let \(a\) and \(b\) be points in~\(\mathbb{C}\). By definition,
the interval \([a,\,b]\) is the set \([a,\,b]=\{(1-\tau)a+\tau{}b:\,\tau\,\,\textup{runs
over}\,\,[0,\,1]\}\). The open interval \((a,b)\) as well as
half-open intervals are defined similarly.
\end{defn}
When \(\mu\) runs over the interval \([0,\,\infty)\), the points \(\zeta_{+}(\mu)\) fill
the interval
\(\Big(0,\,\frac{1}{\sqrt{2}}\,e^{i\pi/4}\Big]\) and the points \(\zeta_{-}(\mu)\) fill
the interval \(\Big[-e^{i\pi/4}\frac{1}{\sqrt{2}},\,0\Big)\).
When \(\mu\) increases, the points \(\zeta_{+}(\mu)\), \(\zeta_{-}(\mu)\) move monotonically: the point \(\zeta_{+}(\mu)\) moves from \(e^{i\pi/4}\,\frac{1}{\sqrt{2}}\) to \(0\), the point \(\zeta_{-}(\mu)\) moves from \(-e^{i\pi/4}\,\frac{1}{\sqrt{2}}\) to \(0\).
 Thus, the mappings \(\mu\to\zeta_{+}(\mu)\) is a homeomorphism of \([0,\,\infty)\) onto
\(\Big(0,\,e^{i\pi/4}\,\frac{1}{\sqrt{2}}\Big]\) and the mapping \(\mu\to\zeta_{-}(\mu)\)
is a homeomorphism of \([0,\,\infty)\) onto \(\Big[-e^{i\pi/4}\frac{1}{\sqrt{2}},\,0\Big)\).
Moreover \(\zeta_{+}(\infty)=\zeta_{-}(\infty)=\{0\}.\)

Thus, the interval \(\Big[-e^{i\pi/4}\frac{1}{\sqrt{2}},\,e^{i\pi/4}\frac{1}{\sqrt{2}}\Big]\)
is naturally decomposed into the union of three nonintersecting parts:
\begin{equation}
\label{nade}
\Big[-e^{i\pi/4}\tfrac{1}{\sqrt{2}},\,e^{i\pi/4}\tfrac{1}{\sqrt{2}}\Big]=
\Big[-e^{i\pi/4}\tfrac{1}{\sqrt{2}},\,0\Big)\cup\{0\}\cup
\Big(0,\,e^{i\pi/4}\tfrac{1}{\sqrt{2}}\Big].
\end{equation}

\begin{thm}
\label{spmop}
The spectrum \(\textup{\large\(\sigma\)}(\mathcal{M}_{F})\)  of the model operator~$\mathcal{M}_{_{\scriptstyle F}}$ looks like this\/{\rm:}
\begin{equation}
\label{dsmo}
\textup{\large\(\sigma\)}(\mathcal{M}_{F})=
\Big[-e^{i\pi/4}\tfrac{1}{\sqrt{2}},\,e^{i\pi/4}\tfrac{1}{\sqrt{2}}\Big].
\end{equation}
\end{thm}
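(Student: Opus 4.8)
The plan is to exploit the fiberwise (``gluing'') structure of the multiplication operator $\mathcal{M}_F$ and to read off $\sigma(\mathcal{M}_F)$ from the elementary spectral picture of the matrices $F(\mu)$ recorded in Lemma~\ref{Spmm} and in the discussion culminating in~\eqref{nade}. First I would note that $\mathcal{M}_F$ is bounded: since $F(\mu)$ is anti-diagonal, its singular values are $|F_{+-}(\mu)|$ and $|F_{-+}(\mu)|$, and a short computation from~\eqref{Ft} and~\eqref{Refl} gives $\|F(\mu)\|_{\mathbb{C}^2\to\mathbb{C}^2}=e^{\pi\mu/2}(2\cosh\pi\mu)^{-1/2}\le1$, so boundedness follows from Lemma~\ref{enmo}. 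The key bookkeeping device is that, writing $z=te^{i\pi/4}$ with real $t$, one has $z\in\big[-e^{i\pi/4}\tfrac1{\sqrt2},e^{i\pi/4}\tfrac1{\sqrt2}\big]$ if and only if $z^2$ lies on the closed segment $S=[0,\tfrac{i}{2}]$ of the imaginary axis. Indeed, by~\eqref{prdi} the matrix $zI-F(\mu)$ is singular exactly when $z^2=\frac{i}{2\cosh\pi\mu}$, and as $\mu$ runs over $[0,\infty)$ the right-hand side sweeps out $\big(0,\tfrac i2\big]\subset S$.

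For the inclusion $\big[-e^{i\pi/4}\tfrac1{\sqrt2},e^{i\pi/4}\tfrac1{\sqrt2}\big]\subseteq\sigma(\mathcal{M}_F)$ I would build approximate eigenvectors. For the eigenvalue $\zeta(\mu)$ of~\eqref{EiVa}, the vector $\begin{bmatrix}F_{+-}(\mu)\\ \zeta(\mu)\end{bmatrix}$ is an eigenvector of $F(\mu)$ (one checks $F(\mu)\begin{bmatrix}F_{+-}\\ \zeta\end{bmatrix}=\zeta\begin{bmatrix}F_{+-}\\ \zeta\end{bmatrix}$ using $F_{-+}F_{+-}=\zeta^2$); after normalization it gives a unit eigenvector $v(\mu)$ depending continuously on $\mu$, since $F_{+-}(\mu)\ne0$ and $\zeta(\mu)\ne0$. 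Fixing $\mu_0$, setting $z=\zeta(\mu_0)$, and putting $\psi_\varepsilon(\mu)=\varepsilon^{-1/2}\,\mathds{1}_{[\mu_0,\mu_0+\varepsilon]}(\mu)\,v(\mu)$, one gets $\|\psi_\varepsilon\|_{\mathfrak M}=1$ while
\begin{equation*}
\|(\mathcal{M}_F-zI)\psi_\varepsilon\|_{\mathfrak M}^{2}=\frac{1}{\varepsilon}\int_{\mu_0}^{\mu_0+\varepsilon}|\zeta(\mu)-\zeta(\mu_0)|^{2}\,d\mu\longrightarrow0\quad(\varepsilon\to0)
\end{equation*}
by continuity of $\zeta$, so $\zeta(\mu_0)$ lies in the approximate point spectrum of $\mathcal{M}_F$. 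The eigenvector $\begin{bmatrix}F_{+-}(\mu)\\ -\zeta(\mu)\end{bmatrix}$ handles each $\zeta_-(\mu_0)=-\zeta(\mu_0)$ in the same way. As $\mu_0$ ranges over $[0,\infty)$ these points fill the two half-open intervals of~\eqref{nade}, and closedness of the spectrum adds the one remaining point $0=\lim_{\mu\to\infty}\zeta(\mu)$, yielding the whole interval.

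For the reverse inclusion $\sigma(\mathcal{M}_F)\subseteq\big[-e^{i\pi/4}\tfrac1{\sqrt2},e^{i\pi/4}\tfrac1{\sqrt2}\big]$ I would exhibit the inverse explicitly for $z$ outside the interval. For a matrix of the form~\eqref{mF} one has
\begin{equation*}
(zI-F(\mu))^{-1}=\frac{1}{D(z,\mu)}\begin{bmatrix}z&F_{+-}(\mu)\\ F_{-+}(\mu)&z\end{bmatrix},\qquad D(z,\mu)=z^2-\frac{i}{2\cosh\pi\mu},
\end{equation*}
by~\eqref{prdi}. If $z$ is outside the interval then $z^2\notin S$, so $d:=\dist(z^2,S)>0$; since $\frac{i}{2\cosh\pi\mu}\in S$ for every $\mu$, we get $|D(z,\mu)|\ge d$ uniformly. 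Meanwhile the numerator matrix has operator norm at most $\sqrt{2|z|^2+\tfrac32}$ uniformly in $\mu$, because $|F_{+-}(\mu)|\le\tfrac1{\sqrt2}$ and $|F_{-+}(\mu)|\le1$. Hence $\mu\mapsto\|(zI-F(\mu))^{-1}\|_{\mathbb{C}^2\to\mathbb{C}^2}$ is bounded, and by Lemma~\ref{enmo} the matrix function $(zI-F(\mu))^{-1}$ generates a bounded multiplication operator which is a two-sided inverse of $zI-\mathcal{M}_F=\mathcal{M}_{zI-F}$; thus $z\in\rho(\mathcal{M}_F)$.

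The only genuinely delicate point is the passage between $\mathcal{M}_F$ and its fibers $F(\mu)$: invertibility of $\mathcal{M}_{zI-F}$ must be matched with \emph{uniform} (essential) boundedness of the fiberwise inverses, and the non-normality of $F(\mu)$ forbids reading the resolvent norm off the distance to $\sigma(F(\mu))$ alone. I sidestep this by using the explicit $2\times2$ resolvent on one side and concentrated approximate eigenvectors on the other, so that Lemma~\ref{enmo} carries all the analytic weight in both directions. The remaining checks (continuity of the eigenvector field, the estimates $|F_{+-}|\le\tfrac1{\sqrt2}$ and $|F_{-+}|\le1$, and the measure-zero role of the endpoint $\mu=0$) are routine.
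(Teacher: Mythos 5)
Your proof is correct, but it does not follow the paper's route, and the difference is worth spelling out. The paper proves Lemma~\ref{eremf} (two-sided bounds on \(\|(zI-F(\mu))^{-1}\|\) in terms of \(|D(z,\mu)|\), derived from the singular-value Lemma~\ref{EMME}), computes \(\inf_{\mu}|D(z,\mu)|=\dist(z^2,[0,i/2])\), and then reads the spectrum off the equivalence \eqref{CoFRe1}: the supremum over \(\mu\) of the fiber resolvent norms is finite exactly when \(z\) lies outside the interval. Your treatment of the inclusion \(\sigma(\mathcal{M}_F)\subseteq\big[-e^{i\pi/4}\tfrac1{\sqrt2},e^{i\pi/4}\tfrac1{\sqrt2}\big]\) is the same in spirit as the paper's (a uniform bound on the fiber inverses fed into Lemma~\ref{enmo}), merely implemented with the explicit adjugate formula instead of trace estimates. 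Where you genuinely diverge is the opposite inclusion. The paper infers it from the lower bound in \eqref{EsSRe}, which tacitly uses the fact that a multiplication operator whose fiberwise inverses fail to be essentially bounded cannot be boundedly invertible; this converse to Lemma~\ref{enmo} is never stated or proved in the paper. Your Weyl-sequence construction \(\psi_\varepsilon=\varepsilon^{-1/2}\mathds{1}_{[\mu_0,\mu_0+\varepsilon]}\,v(\mu)\), built on the continuous eigenvector field \(v(\mu)\) (your verification that \(\begin{bmatrix}F_{+-}(\mu)\\ \pm\zeta(\mu)\end{bmatrix}\) are eigenvectors is correct, since \(F_{+-}F_{-+}=\zeta^2\)), supplies exactly the missing argument, and the closedness-of-spectrum step correctly picks up the point \(0\), which is not an eigenvalue of any fiber. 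So your identification of this as ``the genuinely delicate point'' is well taken. The trade-off: your argument is more self-contained and elementary for the spectral identity itself, while the paper's quantitative two-sided estimates are not a detour --- they are reused immediately as \eqref{CoFReso} to prove Theorem~\ref{TEstRes} (resolvent asymptotics along normals to the spectrum) and Corollary~\ref{NoSNO} (non-similarity of \(\mathscr{F}_{\mathbb{R}^{+}}\) to a normal operator), quantitative information that your qualitative Weyl-sequence approach does not produce.
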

 In other words, Theorem \ref{spmop} claims that the spectrum \(\textup{\large\(\sigma\)}(\mathcal{M}_{F})\)  of~\(\mathcal{M}_{_{\scriptstyle F}}\) is represented in the form
\begin{equation}
\label{GlSp}
\textup{\large\(\sigma\)}(\mathcal{M}_{_{\scriptstyle F}})=
\bigcup_{\mu\in[0,\,\infty]}\textup{\large\(\sigma\)}(F(\mu)).
\end{equation}
Since the spectra of unitarily equivalent operators coincide, Theorem \ref{spmop} can be reformulated as follows.
\begin{thm}
\label{spfop}
For the spectrum \(\textup{\large\(\sigma\)}(\mathscr{F}_{\mathbb{R}^{+}})\)  of the truncated Fourier operator~\(\mathscr{F}_{\mathbb{R}^{+}}\) we have
\begin{equation}
\label{dsmot}
\textup{\large\(\sigma\)}(\mathscr{F}_{\mathbb{R}^{+}})=
\Big[-e^{i\pi/4}\tfrac{1}{\sqrt{2}},\,e^{i\pi/4}\tfrac{1}{\sqrt{2}}\Big].
\end{equation}
\end{thm}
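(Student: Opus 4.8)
The plan is to deduce the statement directly from the unitary equivalence established in Theorem~\ref{mt} together with the description of the spectrum of the model operator in Theorem~\ref{spmop}. Indeed, \eqref{UE} exhibits \(\mathscr{F}_{\mathbb{R}^{+}}\) and \(\mathcal{M}_{F}\) as unitarily equivalent operators, and unitarily equivalent operators have identical spectra; hence \(\sigma(\mathscr{F}_{\mathbb{R}^{+}})=\sigma(\mathcal{M}_{F})\), and the claimed identity \eqref{dsmot} is just \eqref{dsmo} transported across \(U\). So the only genuine work sits in Theorem~\ref{spmop}, and I would organise the argument around proving that.

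For Theorem~\ref{spmop} the guiding principle is that the spectrum of a multiplication operator by a matrix function is the essential closure of the union of the fibrewise spectra \(\sigma(F(\mu))\). Concretely, I would use Lemma~\ref{enmo}: the operator \(zI-\mathcal{M}_{F}=\mathcal{M}_{zI-F}\) is boundedly invertible precisely when the pointwise inverses \((zI-F(\mu))^{-1}\) exist for a.e.\ \(\mu\) and satisfy \(\operatorname{ess\,sup}_{\mu}\|(zI-F(\mu))^{-1}\|_{\mathbb{C}^2\to\mathbb{C}^2}<\infty\), in which case the inverse is \(\mathcal{M}_{(zI-F)^{-1}}\). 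Thus \(z\in\rho(\mathcal{M}_F)\) iff the fibre resolvents are uniformly (essentially) bounded in \(\mu\). Because \(F(\mu)\) is the off-diagonal matrix \eqref{mF}, its resolvent is explicit,
\[
(zI-F(\mu))^{-1}=\frac{1}{(z-\zeta_{+}(\mu))(z-\zeta_{-}(\mu))}\begin{bmatrix} z & F_{+-}(\mu)\\ F_{-+}(\mu) & z\end{bmatrix},
\]
with \(\zeta_{\pm}(\mu)\) as in \eqref{EiVa}--\eqref{EiV} and denominator \(D(z,\mu)=z^2-F_{+-}(\mu)F_{-+}(\mu)\) computed in Lemma~\ref{Spmm}.

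Write \(\Delta=\big[-e^{i\pi/4}\tfrac{1}{\sqrt2},\,e^{i\pi/4}\tfrac{1}{\sqrt2}\big]\). For the inclusion \(\sigma(\mathcal{M}_F)\subseteq\Delta\) I would note that \(|F_{+-}(\mu)F_{-+}(\mu)|=\tfrac{1}{2\cosh\pi\mu}\) by \eqref{Refl}, so both entries \(F_{+-}(\mu),F_{-+}(\mu)\) stay bounded uniformly in \(\mu\); consequently the numerator matrix has norm \(\le|z|+1\) and
\[
\|(zI-F(\mu))^{-1}\|_{\mathbb{C}^2\to\mathbb{C}^2}\le\frac{|z|+1}{|z-\zeta_{+}(\mu)|\,|z-\zeta_{-}(\mu)|}.
\]
If \(z\notin\Delta\), then, since every \(\zeta_{\pm}(\mu)\) lies in \(\Delta\), we have \(|z-\zeta_{\pm}(\mu)|\ge\dist(z,\Delta)>0\) for all \(\mu\); the fibre resolvents are uniformly bounded, and \(z\in\rho(\mathcal{M}_F)\) by Lemma~\ref{enmo}. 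For the reverse inclusion \(\Delta\subseteq\sigma(\mathcal{M}_F)\), take \(z_0\in\Delta\) with \(z_0\ne0\): by the homeomorphism property recorded before \eqref{nade}, \(z_0=\zeta_{+}(\mu_0)\) or \(z_0=\zeta_{-}(\mu_0)\) for a unique \(\mu_0\), so \(D(z_0,\mu_0)=0\) while the other factor and the numerator stay bounded away from \(0\) near \(\mu_0\); hence \(\|(z_0I-F(\mu))^{-1}\|\to\infty\) as \(\mu\to\mu_0\), the essential supremum is infinite, and \(z_0\in\sigma(\mathcal{M}_F)\). The remaining point \(z_0=0\) is the common limit \(\zeta_{\pm}(\mu)\to0\) as \(\mu\to\infty\), so it lies in the closure of \(\sigma(\mathcal{M}_F)\) and, the spectrum being closed, in \(\sigma(\mathcal{M}_F)\) itself (equivalently \(\|F(\mu)^{-1}\|\to\infty\), since \(D(0,\mu)=-\tfrac{i}{2\cosh\pi\mu}\to0\)).

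The main obstacle is the uniform fibrewise resolvent estimate, and in particular the behaviour as \(\mu\to\infty\), where \(F(\mu)\) degenerates and both eigenvalues collapse to \(0\): one must verify that off \(\Delta\) the bound remains uniform despite this degeneration, the two entries \(F_{+-}\) and \(F_{-+}\) having very different growth and being controlled only through the product identity \eqref{Refl}. One must also argue at the single boundary value \(z_0=0\) by a limiting/closedness argument rather than by exhibiting a fibre at which \(0\) is an eigenvalue. The passage from \(\sup\) to \(\operatorname{ess\,sup}\) is harmless here, since \(\zeta_{\pm}\) are continuous and injective, so the resolvent norm blows up on neighbourhoods of positive measure.
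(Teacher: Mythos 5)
Your reduction of the statement itself coincides with the paper's: Theorem~\ref{mt} gives the unitary equivalence \eqref{UE}, spectra are invariant under unitary equivalence, and \eqref{dsmot} is \eqref{dsmo} transported across \(U\). The genuine difference is in how you then establish Theorem~\ref{spmop}. The paper's route goes through Lemma~\ref{EMME} (two-sided trace/singular-value bounds for \(2\times2\) matrices), which yields the \emph{two-sided} fibre estimates \eqref{EsSRe} of Lemma~\ref{eremf}, and then the identification \(\inf_{\mu}|D(z,\mu)|=\dist\big(z^2,[0,i/2]\big)\) in \eqref{DistCo}--\eqref{SepCo}, so that the resolvent set is read off from the position of \(z^{2}\) relative to the segment \([0,i/2]\). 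You instead invert \(zI-F(\mu)\) by the cofactor formula, factor \(D(z,\mu)=(z-\zeta_{+}(\mu))(z-\zeta_{-}(\mu))\), bound the fibre resolvents off the segment by \((|z|+1)\,\dist\big(z,\textup{\large\(\sigma\)}(\mathscr{F}_{\mathbb{R}^{+}})\big)^{-2}\), and handle points of the segment by blow-up at the unique \(\mu_{0}\) with \(\zeta_{\pm}(\mu_{0})=z_{0}\), with the point \(0\) captured by closedness of the spectrum. Both arguments are correct, and both rest on the same principle (left implicit in the paper as well) that \(z\in\rho(\mathcal{M}_{F})\) iff the fibre resolvents exist a.e.\ and are essentially uniformly bounded; your remark on \(\sup\) versus \(\operatorname{ess\,sup}\) is in fact more careful than the paper on this point. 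What your route loses is the lower resolvent bound: the paper's two-sided estimates \eqref{EsSRe}, \eqref{CoFRe} are precisely what later produce the resolvent asymptotics of Theorem~\ref{TEstRes} and Corollary~\ref{NoSNO}, whereas your one-sided bound suffices only for identifying the spectrum. Your version is more elementary and self-contained for that limited purpose.

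One repair is needed. The inference ``\(|F_{+-}(\mu)F_{-+}(\mu)|=\tfrac{1}{2\cosh\pi\mu}\) by \eqref{Refl}, so both entries stay bounded uniformly in \(\mu\)'' is a non sequitur: a bounded product does not bound its factors, and indeed \(F_{-+}(\mu)\) carries the growing factor \(e^{\frac{\pi}{2}\mu}\) in \eqref{Ft2}. The needed uniform bounds \(|F_{+-}(\mu)|\le 1/\sqrt{2}\), \(|F_{-+}(\mu)|<1\) are true, but they come from \eqref{av}--\eqref{MaFC} (equivalently \eqref{SMc2}), whose derivation uses \eqref{Refl} \emph{together with} the conjugate symmetry \(\overline{\Gamma(1/2+i\mu)}=\Gamma(1/2-i\mu)\), i.e.\ \eqref{AbGa}. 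Citing those displayed formulas of the paper closes the gap with no further work.
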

In the present section we prove the description \eqref{GlSp}
of the spectrum \(\textup{\large\(\sigma\)}(\mathcal{M}_{_{\scriptstyle F}})\)
of the model operator \(\mathcal{M}_{F}\).
In passing, we obtain some estimates for the resolvents of the matrices
\(F(\mu)\). These estimates are not quite evident because the matrices
\(F(\mu)\) are not selfadjoint. In particular, \(F(\infty)\) is a
Jordan cell.

\begin{lem}
\label{EMME}
The norm of an arbitrary $(2\times2)$-matrix  \(M\),
\begin{equation*}
M=
\begin{bmatrix}
m_{11}&m_{12}\\[1.5ex]
m_{21}&m_{22}
\end{bmatrix}\,,
\end{equation*}
viewed as an operator from \(\mathbb{C}^2\) to \(\mathbb{C}^2\), admits the estimates
\begin{equation}
\label{EMMED}
\tfrac{1}{2}\,\textup{trace}\,(M^{\ast}M)
\leq\|M\|^2
\leq\,\textup{trace}\,(M^{\ast}M).
\end{equation}
Under the assumption that \(\det\,M\not=0\), the norm of the inverse matrix \(M^{-1}\) can
be estimated as follows\/{\rm:}
\begin{equation}
\label{EMMEI}
\begin{split}
|(\det\,M)|^{-2}\,\textup{trace}\,(M^{\ast}M)
&-\frac{2}{\textup{trace}\,(M^{\ast}M)}
\\
&\leq\|M^{-1}\|^{\,2}\leq
|(\det\,M)|^{-2}\,\textup{trace}\,(M^{\ast}M),
\end{split}
\end{equation}
where
\begin{equation}
\label{Trac}%
\textup{trace}\,M^{\ast}M=|m_{11}|^2+|m_{12}|^2+|m_{21}|^2+|m_{22}|^2.
\end{equation}
\end{lem}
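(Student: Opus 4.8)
The plan is to prove the two-sided estimates in Lemma~\ref{EMME} by reducing everything to the two eigenvalues of the positive semidefinite matrix \(M^{\ast}M\). Write \(s_1^2\ge s_2^2\ge 0\) for these eigenvalues (the squared singular values of \(M\)). Then by the spectral characterization of the operator norm on \(\mathbb{C}^2\) we have \(\|M\|^2=s_1^2\), while \(\textup{trace}\,(M^{\ast}M)=s_1^2+s_2^2\) and \(\det(M^{\ast}M)=|\det M|^2=s_1^2 s_2^2\). The entire lemma is then a statement about the two nonnegative numbers \(s_1^2,s_2^2\), and the formula~\eqref{Trac} for the trace in terms of the entries is just the direct computation of the diagonal of \(M^{\ast}M\).

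For the first estimate~\eqref{EMMED}, the upper bound \(\|M\|^2=s_1^2\le s_1^2+s_2^2=\textup{trace}\,(M^{\ast}M)\) is immediate from \(s_2^2\ge 0\), and the lower bound \(\tfrac12\,\textup{trace}\,(M^{\ast}M)=\tfrac12(s_1^2+s_2^2)\le s_1^2=\|M\|^2\) follows from \(s_2^2\le s_1^2\). For the inverse, note that when \(\det M\ne 0\) both \(s_1,s_2\) are positive and the singular values of \(M^{-1}\) are \(s_1^{-1},s_2^{-1}\), so \(\|M^{-1}\|^2=s_2^{-2}\), the reciprocal of the \emph{smaller} squared singular value. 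The quantity on the right of~\eqref{EMMEI} is
\begin{equation*}
|\det M|^{-2}\,\textup{trace}\,(M^{\ast}M)=\frac{s_1^2+s_2^2}{s_1^2 s_2^2}=\frac{1}{s_1^2}+\frac{1}{s_2^2}.
\end{equation*}
Thus the upper bound \(\|M^{-1}\|^2=s_2^{-2}\le s_1^{-2}+s_2^{-2}\) is again immediate from positivity of the omitted term.

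The only step requiring a genuine (but short) computation is the lower bound in~\eqref{EMMEI}. Here I must show
\begin{equation*}
\frac{1}{s_1^2}+\frac{1}{s_2^2}-\frac{2}{s_1^2+s_2^2}\le\frac{1}{s_2^2}.
\end{equation*}
This is equivalent to \(\dfrac{1}{s_1^2}\le\dfrac{2}{s_1^2+s_2^2}\), i.e.\ to \(s_1^2+s_2^2\le 2s_1^2\), which is just \(s_2^2\le s_1^2\) once more. Hence all four inequalities collapse to the single ordering \(0\le s_2^2\le s_1^2\) together with the elementary identities for trace and determinant in terms of singular values, and the proof amounts to recording these observations.

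I expect the main (very mild) obstacle to be purely bookkeeping: confirming that \(\|M\|^2\) and \(\|M^{-1}\|^2\) are exactly \(s_1^2\) and \(s_2^{-2}\) respectively, and that the subtracted correction term \(2/\textup{trace}\,(M^{\ast}M)\) is precisely what is needed to sharpen the trivial bound \(s_2^{-2}\le s_1^{-2}+s_2^{-2}\) from above into a matching two-sided estimate. Everything else is the two inequalities \(s_2^2\ge 0\) and \(s_2^2\le s_1^2\), so no case analysis or delicate estimation is involved.
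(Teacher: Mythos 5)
Your proposal is correct and takes essentially the same approach as the paper: both reduce all four inequalities to the singular values of \(M\), using \(\|M\|=s_{\max}\), \(\|M^{-1}\|=s_{\min}^{-1}\), \(\textup{trace}\,(M^{\ast}M)=s_{\max}^2+s_{\min}^2\), and \(|\det M|^2=s_{\max}^2 s_{\min}^2\). The only difference is cosmetic: you explicitly carry out the elementary algebra (each inequality collapsing to \(s_{\min}^2\le s_{\max}^2\) or \(s_{\min}^2\ge 0\)) that the paper dismisses with the remark that the inequalities hold for arbitrary \(0<s_1\le s_0\).
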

\begin{proof}
Let \(s_0\) and \(s_1\) be the singular values of the matrix \(M\), i.e.,
\begin{equation}
\label{SinV}
0<s_1\leq{}s_0
\end{equation}
 and the numbers \(s_0^2,\,s_1^2\) are eigenvalues of the matrix
\(M^{\ast}M\). Then
\begin{gather*}
\|M\|=s_0,\quad \|M^{-1}\|=s_1^{\,-1},\\
 \textup{trace}(M^{\ast}M)=s_0^2+s_1^2,
\quad{}|\det(M)|^2=\det{}(M^{\ast}M)=s_0^2\cdot{}s_1^2.
\end{gather*}%
Therefore, inequality \eqref{EMMED} takes the form
\[\frac{1}{2}(s_{0}^{2}+s_{1}^{2})\leq{}s_{0}^{2}\leq(s_0^{2}+s_1^{2})\,,\]
and \eqref{EMMEI} takes the form
\[(s_0s_1)^{\,-2}(s_0^{\,2}+s_{1}^{\,2})\,-\,\frac{2}{s_0^{\,2}+s_1^{\,2}}\,\leq\,s_1^{\,-2}\,\leq\,
(s_0s_1)^{\,-2}(s_0^{\,2}+s_{1}^{\,2}).\]
The last inequalities are valid for arbitrary numbers \(s_0,\,s_1\) that satisfy~\eqref{SinV}.
\end{proof}
Since the numbers \(\Gamma(1/2\pm{}i\mu)\) are complex conjugate, from \eqref{Refl}
it follows that
\begin{equation}
\label{AbGa}
|\Gamma(1/2\pm{}i\mu)|^2=\frac{2\pi}{e^{\pi\mu}+e^{-\pi\mu}},\quad
\mu\in\mathbb{R}^{+}.
\end{equation}
Using \eqref{Ft} and \eqref{AbGa}, we calculate the absolute values \(|F_{+-}(\mu)|\) and \(|F_{-+}(\mu)|\):
 \begin{subequations}
 \label{av}
 \begin{align}
 \label{av1}
 |F_{+-}(\mu)|=&\frac{1}{\sqrt{1+e^{\,\,2\pi{}\mu}}}\,,\ \
 \mu\in\mathbb{R^+}, \\
  \label{av2}
 |F_{-+}(\mu)|=&\frac{1}{\sqrt{1+e^{-2\pi{}\mu}}},\ \
 \mu\in\mathbb{R^+}.
 \end{align}
 \end{subequations}
Note that, in particular,
\begin{equation}
\label{MaFC}
1/\sqrt{2} \leq|F_{-+}(\mu)|<1,\quad |F_{+-}(\mu)|\leq 1/\sqrt{2},\quad \mu\in\mathbb{R}^{+}.
\end{equation}
If \(\mu\) runs over the interval \([0,\infty)\), then
\(|F_{-+}(\mu)|\) increases from \(2^{-1/2}\) to \(1\) and
\(|F_{+-}(\mu)|\) decreases from \(2^{-1/2}\) to \(0\). In
particular,
\begin{equation}
\label{MaFCSu}
\sup_{\mu\in\mathbb{R}^{+}}|F_{-+}(\mu)|=\underset{\mu\in\mathbb{R}^{+}}%
{\textup{ess\,sup}}|F_{-+}(\mu)|=1.
\end{equation}
From \eqref{av} it follows that
\begin{equation}
\label{SMc2}
 |F_{+-}(\mu)|^2+|F_{-+}(\mu)|^2=1.
\end{equation}
 For the matrix \(F(t)\) defined by \eqref{mF}--\eqref{Ft} its norm is
 \begin{equation}
 \|F(t)\|_{\mathbb{C}^2\to\mathbb{C}^2}=\frac{1}{\sqrt{1+e^{-2\pi{}t}}}
 \quad\mbox{ for all }\ t\in\mathbb{R}^{+}.
 \end{equation}
\begin{lem}
\label{eremf}
For every \(\mu\in[0,\infty)\) and every \(z\in\mathbb{C}\setminus\!\textup{\large\(\sigma\)}(F(\mu))\),
the matrix \((zI-F(\mu))^{-1}\) admits the estimates
\begin{equation}
\begin{split}
\label{EsSRe}
 |D(z,\mu)|^{-2}\big(2|z|^2+1\big)&-\frac{2}{2|z|^2+1}
 \\
&\leq\|(zI-F(\mu))^{-1}\|^{2}\leq{}|D(z,\mu)|^{-2}\big(2|z|^2+1\big)\,,
\end{split}
\end{equation}%
where
 \begin{equation}
 \label{det}
 D(z,\mu)=\det(zI-F(\mu))
 \end{equation}
 and
\(\textup{\large\(\sigma\)}(F(\mu))\) is the spectrum of the matrix~\(F(\mu)\).
\end{lem}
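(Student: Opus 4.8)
The plan is to obtain \eqref{EsSRe} as a direct specialization of the general matrix bound \eqref{EMMEI} from Lemma~\ref{EMME}, applied to the concrete matrix $M=zI-F(\mu)$. Since $z\in\mathbb{C}\setminus\sigma(F(\mu))$ guarantees $\det(zI-F(\mu))=D(z,\mu)\neq0$ (the spectrum being exactly the zero set of $D(\,\cdot\,,\mu)$, by \eqref{di}), the matrix $zI-F(\mu)$ is invertible and the hypothesis $\det M\neq 0$ of Lemma~\ref{EMME} is satisfied.

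First I would read off the entries of $M=zI-F(\mu)$: by the anti-diagonal structure \eqref{mF} these are $m_{11}=m_{22}=z$, $m_{12}=-F_{+-}(\mu)$, and $m_{21}=-F_{-+}(\mu)$. The only quantity appearing in \eqref{EMMEI} that still needs to be evaluated is $\textup{trace}\,(M^{\ast}M)$. Using the formula \eqref{Trac}, I obtain
\begin{equation*}
\textup{trace}\,(M^{\ast}M)=|z|^2+|F_{+-}(\mu)|^2+|F_{-+}(\mu)|^2+|z|^2
=2|z|^2+\big(|F_{+-}(\mu)|^2+|F_{-+}(\mu)|^2\big).
\end{equation*}

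The decisive simplification comes from the Pythagorean-type identity \eqref{SMc2}, namely $|F_{+-}(\mu)|^2+|F_{-+}(\mu)|^2=1$, which collapses the bracket to $1$ and yields the clean value $\textup{trace}\,(M^{\ast}M)=2|z|^2+1$. Substituting this together with $\det M=D(z,\mu)$ into the two-sided bound \eqref{EMMEI} reproduces \eqref{EsSRe} verbatim, for every $\mu\in[0,\infty)$.

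I expect no genuine obstacle: essentially all of the analytic content has been packaged in advance into the singular-value estimate of Lemma~\ref{EMME} and into the relation \eqref{SMc2} for the moduli of the Gamma-function entries. The one point worth emphasizing is that the lower bound in \eqref{EMMEI}—the nontrivial half, which is needed precisely because $F(\mu)$ is \emph{not} normal and degenerates to a Jordan cell as $\mu\to\infty$—is exactly what makes the estimate sharp enough to later control the resolvent near the spectrum; but even this is already supplied by Lemma~\ref{EMME}, so here it requires no further work beyond the substitution.
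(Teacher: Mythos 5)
Your proposal is correct and follows exactly the paper's own proof: apply the two-sided bound \eqref{EMMEI} of Lemma~\ref{EMME} to $M=zI-F(\mu)$, compute $\textup{trace}\,(M^{\ast}M)=2|z|^2+|F_{+-}(\mu)|^2+|F_{-+}(\mu)|^2$ via \eqref{Trac}, and collapse it to $2|z|^2+1$ using \eqref{SMc2}. The extra remarks on invertibility and on the role of the lower bound are accurate but add nothing beyond the paper's argument.
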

\begin{proof} We apply estimate \eqref{EMMEI} to the matrix \(M=zI-F(\mu)\). We calculate the
quantity \(\textup{trace}\,M^{\ast}M\) with the help of~\eqref{Trac}:
\begin{equation}
\label{Etr}
\textup{trace}\,(zI-F(\mu))^{\ast}(zI-F(\mu))=2|z|^{2}+|F_{+-}(\mu)|^{2}+|F_{-+}(\mu)|^{2}.
\end{equation}
Using~\eqref{SMc2}, we see that
\begin{equation}
\textup{trace}\,(zI-F(\mu))^{\ast}(zI-F(\mu))=2|z|^{2}+1.
\end{equation}
\end{proof}
\begin{proof}[Proof of Theorem \ref{spmop}]
When \(\mu\) runs  over the interval \([0,\infty)\), the complex
numbers \(\dfrac{i}{2\cosh{\pi\mu}}\), which occur on the right-hand side of~\eqref{prdi}, fill the  interval~\((0,i/2]\). Therefore,
\begin{equation}
\label{DistCo}%
\inf_{\mu\in(0,\infty)}|D(z,\mu)|=\textup{dist}(z^2,\,[0,i/2]),
\end{equation}%
where
\begin{equation}
\label{Dist}%
\textup{dist}(z^2,\,[0,i/2]\,)=\min_{\zeta\in[0,i/2]}|z^2-\zeta|.
\end{equation}%
In particular,
\begin{equation*}
\Big(\inf_{\mu\in[0,\infty)}|D(z,\mu)|>0\Big)\Leftrightarrow
\big(\,z^2\not\in[0,i/2]\big),
\end{equation*}%
or, in other words,
\begin{equation}
\label{SepCo}
\Big(\inf_{\mu\in[0,\infty)}|D(z,\mu)|>0\Big)\Leftrightarrow
\Big(\,z\not\in\Big[-\frac{1}{\sqrt{2}}\,e^{i\pi/4},
\frac{1}{\sqrt{2}}\,e^{i\pi/4}\Big]\Big),
\end{equation}%
Inequalities \eqref{EsSRe} imply
\begin{equation}
\begin{split}
\label{CoFRe}
&\frac{2|z|^2+1}{\big(\inf_{\mu\in[0,\infty)}|D(z,\mu)|\big)^{2}}-\frac{2}{2|z|^2+1}
\\
&\quad\leq\sup\limits_{\mu\in[0,\infty)}\|(zI-F(\mu))^{-1}\|^{2}_{\mathbb{C}^2\to\mathbb{C}^2}\leq
\frac{2|z|^2+1}{\big(\inf_{\mu\in[0,\infty)}|D(z,\mu)|\big)^{2}}.
\end{split}
\end{equation}
From \eqref{SepCo} and \eqref{CoFRe} it follows that
\begin{equation}
\label{CoFRe1}
\Big(\sup\limits_{\mu\in[0,\infty)}\!\|(zI\!-\!F(\mu))^{-1}\|_{\mathbb{C}^2\to\mathbb{C}^2}\!<\!\infty\Big)\!\Leftrightarrow\!
\Big(z\!\not\in\!\Big[\!-\!\frac{1}{\sqrt{2}}\,e^{i\pi/4}.
\frac{1}{\sqrt{2}}\,e^{i\pi/4}\Big]\Big)
\end{equation}
\end{proof}
By Lemma \ref{enmo}, inequalities \eqref{CoFRe} can be viewred as estimates for the norm
  of the resolvent of the model operator \(\mathcal{M}_F\), or, in other words, as  estimates for the
  norm of the resolvent of the truncated Fourier--Plancherel operator~\(\mathscr{F}_{\mathbb{R}^{+}}\):
  \begin{equation}
  \begin{split}
\label{CoFReso}
\frac{2|z|^2+1}{\big(\textup{dist}(z^2,[0,i/2])\big)^{2}}-\frac{2}{2|z|^2+1}&\leq
\big\|\big(z\mathscr{I}-\mathscr{F}_{\mathbb{R}^{+}}
\big)^{-1}\big\|_{L^2(\mathbb{R}^{+})\to{}L^2(\mathbb{R}^{+})}^{2}
\\
&\leq
\frac{2|z|^2+1}{\big(\textup{dist}(z^2\,,\,[0,i/2]\,)\big)^{2}}\cdot
\end{split}
\end{equation}
The left inequality can be presented in the form
\begin{equation*}
\frac{(2|z|^2+1)^{1/2}}{\textup{dist\,}(z^2,[0,i/2]\,)}
\sqrt{1-\frac{2(\textup{dist\,}(z^2,\,[0,i/2]\,)^2}{(2|z|^2+1)^2}}
\leq\big\|\big(z\mathscr{I}-\mathscr{F}_{\mathbb{R}^{+}}
\big)^{-1}\big\|_{L^2(\mathbb{R}^{+})\to{}L^2(\mathbb{R}^{+})}.
\end{equation*}
Here, the quantity under the square root  is positive
because
\begin{equation*}
\label{GOH}%
\frac{2\textup{dist\, }^2(z^2,\,[0,i/2])}{\big(2|z|^2+1\big)^2}\leq
\frac{2|z|^2}{\big(2|z|^2+1\big)^2}\leq\frac{1}{2}.
\end{equation*}
Since \((1-\alpha)\leq\sqrt{1-\alpha}\) for \(0\leq\alpha\leq1\), we have
\[1-\frac{2\textup{dist}^2(z^2,\,[0,i/2])}{\big(2|z|^2+1\big)^2}
\leq\sqrt{1-\frac{2\textup{dist}^2(z^2,\,[0,i/2])}{\big(2|z|^2+1\big)^2}}
.\]
Thus, we get a lower estimate for the norm of the resolvent:
\begin{subequations}
\label{EsRes}
\begin{equation}
\label{LoEs}
\frac{\big(2|z|^2+1\big)^{1/2}}{\textup{dist}(z^2,[0,i/2])}-
\frac{2\textup{dist}(z^2,[0,i/2])}{\big(2|z|^2+1\big)^{3/2}}
\leq\big\|(z\mathscr{I}-\mathscr{F}_{\mathbb{R}^{+}})^{-1}\big\|_{L^2(\mathbb{R}^{+})\to{}L^2(\mathbb{R}^{+})}.
\end{equation}
An upper estimate for the norm of the resolvent is provided by the right inequality in~\eqref{CoFReso}:
\begin{equation}
\label{UpEs}
\big\|(z\mathscr{I}-\mathscr{F}_{\mathbb{R}^{+}})^{-1}\big\|_{L^2(\mathbb{R}^{+})\to{}L^2(\mathbb{R}^{+})}
\leq\frac{\big(2|z|^2+1\big)^{1/2}}{\textup{dist}(z^2\,,\,[0,i/2])}\cdot
\end{equation}
\end{subequations}
The smaller is the value $\dist(z^2,[0,i/2])$, the closer are the lower estimate \eqref{LoEs}
and the upper estimate \eqref{UpEs}.

 However, we would like to estimate  the resolvent not in terms of
 $$
 \dist(z^2,\,[0,i/2]),
 $$
  but rather in terms of
  $
  \dist(z,\,\textup{\large\(\sigma\)}(\mathscr{F}_{\mathbb{R}^{+}})).
  $
\begin{lem}
\label{DfDi}
Let \(\zeta\) be a point of the spectrum \(\textup{\large\(\sigma\)}(\mathscr{F}_{\mathbb{R}^{+}}))\)\textup{:}
\begin{equation}
\label{zePsp}
\zeta\in\bigg[-\frac{1}{\sqrt{2}}\,e^{i\pi/4},\,
\frac{1}{\sqrt{2}}\,e^{i\pi/4}\bigg]\,,
\end{equation}
and let $z$  lie on the normal to the interval \(\Big[-\frac{1}{\sqrt{2}}\,e^{i\pi/4},\, \frac{1}{\sqrt{2}}\,e^{i\pi/4}\Big]\) at the point~$\zeta\!:$
\begin{equation}
\label{znorspm}
z=\zeta\pm{}|z-\zeta|e^{i3\pi/4}.
\end{equation}
Then
\begin{equation}
\label{distot}
\textup{dist}\big(z^2\,,\big[0\,,i/2\big]\,\big)\,=\,
\begin{cases}
2|\zeta|\,|z-\zeta|\,&\textup{if \ }|z-\zeta|\leq|\zeta|\,,\\
|\zeta|^2+|z-\zeta|^2\,=\,|z|^2&\textup{if \ }|z-\zeta|\geq|\zeta|.
\end{cases}
\end{equation}
\end{lem}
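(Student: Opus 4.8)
The plan is to reduce the whole assertion to one elementary computation by rotating coordinates so that the spectral interval and the segment \([0,i/2]\) become coordinate objects, and then reading off the distance by orthogonal projection. The starting observation, already recorded in~\eqref{SepCo}, is that the squaring map \(z\mapsto z^2\) carries the spectral interval of~\eqref{dsmot} onto \([0,i/2]\): writing \(\zeta=\tau\,e^{i\pi/4}\) with \(\tau\in\mathbb{R}\) and \(|\tau|=|\zeta|\le 1/\sqrt2\) by~\eqref{zePsp}, one has \(\zeta^2=|\zeta|^2 i\). Since \(e^{i3\pi/4}=i\,e^{i\pi/4}\), the normal direction in~\eqref{znorspm} is orthogonal to the direction \(e^{i\pi/4}\) of the spectrum, and~\eqref{znorspm} becomes
\[
z=e^{i\pi/4}\,(\tau\pm i\,w),\qquad w:=|z-\zeta|\ge 0 .
\]

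Next I would square this expression. A direct expansion gives
\[
z^2=i\,(\tau\pm i w)^2=\mp\,2\tau w+i\,(\tau^2-w^2),
\]
so that \(\re(z^2)=\mp 2\tau w\) and \(\im(z^2)=\tau^2-w^2=|\zeta|^2-|z-\zeta|^2\). The sign \(\pm\) only flips the sign of the real part and is therefore irrelevant to the distance; in particular \(|\re(z^2)|=2|\zeta|\,|z-\zeta|\). I would then compute \(\dist(z^2,[0,i/2])\) by projecting \(z^2=a+ib\) orthogonally onto the imaginary axis: the nearest point of \([0,i/2]=\{is:s\in[0,\tfrac12]\}\) is \(i\,\min(\max(b,0),\tfrac12)\), so the distance equals \(|a|\) when \(b\in[0,\tfrac12]\), equals \(|z^2|\) when \(b\le 0\) (nearest point the endpoint \(0\)), and equals \(|z^2-i/2|\) when \(b>\tfrac12\).

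The crucial point—and the only place where the hypothesis \(\zeta\in\textup{\large\(\sigma\)}(\mathscr{F}_{\mathbb{R}^{+}})\) enters—is that \(|\zeta|\le 1/\sqrt2\) forces \(b=|\zeta|^2-|z-\zeta|^2\le|\zeta|^2\le\tfrac12\); hence the third regime \(b>\tfrac12\), in which the endpoint \(i/2\) would be closest, never occurs, and only two cases survive. When \(|z-\zeta|\le|\zeta|\) we have \(b\ge 0\), so the distance is \(|\re(z^2)|=2|\zeta|\,|z-\zeta|\), the first branch of~\eqref{distot}. When \(|z-\zeta|\ge|\zeta|\) we have \(b\le 0\), so the distance is \(|z^2|=|z|^2\); the identity \(|z|^2=|\zeta|^2+|z-\zeta|^2\) follows immediately from \(z=e^{i\pi/4}(\tau\pm i w)\), giving the second branch of~\eqref{distot}. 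The two formulas agree at the junction \(|z-\zeta|=|\zeta|\), where both equal \(2|\zeta|^2\).

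I expect no serious obstacle: after the coordinate rotation the computation is routine trigonometry. The one subtlety to handle carefully is the endpoint bookkeeping just described, namely verifying that the spectral constraint \(|\zeta|\le 1/\sqrt2\) rules out the upper endpoint \(i/2\) ever being the closest point of the segment; this is exactly what collapses the three a priori cases into the two branches appearing in~\eqref{distot}.
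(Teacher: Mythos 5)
Your proposal is correct and takes essentially the same route as the paper's proof: both substitute \(\zeta=\pm|\zeta|e^{i\pi/4}\) into \eqref{znorspm}, compute \(z^2=\mp2|\zeta|\,|z-\zeta|+i\bigl(|\zeta|^2-|z-\zeta|^2\bigr)\), and split into the two cases according to whether the orthogonal projection of \(z^2\) onto the imaginary axis lands inside the segment \([0,i/2]\) (giving the horizontal distance \(2|\zeta|\,|z-\zeta|\)) or at the endpoint \(0\) (giving \(|z^2|=|z|^2\)). Your explicit verification that \(|\zeta|\le 1/\sqrt{2}\) excludes the third regime \(\im(z^2)>1/2\) is exactly the fact the paper uses implicitly when it asserts that \(i(|\zeta|^2-|z-\zeta|^2)\) lies on \([0,i/2]\).
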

\begin{proof}
Condition \eqref{zePsp} means that
\(\zeta=\pm|\zeta|e^{i\pi/4}\). Substituting this  in~\eqref{znorspm}, we obtain
\begin{equation*}
z^2=\pm2|\zeta|\,|z-\zeta|+i(|\zeta|^2-|z-\zeta|^2).
\end{equation*}
If \(|z-\zeta|\leq|\zeta|\), then the point \(i(|\zeta|^2-|z-\zeta|^2)\)
lies on the interval \([0,i/2]\). In this case,
$$
\dist\,(z^2,\,[0,i/2])=2|\zeta|\,|z-\zeta|.
$$
 If \(|z-\zeta|\geq|\zeta|\),
then the point \(i(|\zeta|^2-|z-\zeta|^2)\) lies on the half-axis \([0,-i\infty)\).
In this case,
$$
\textup{dist}\,\big(z^2,[0,\,i/2]\big)=\sqrt{{\big(|\zeta|^2-|z-\zeta|^2\big)^2+
4|\zeta|^2|z-\zeta|^2}}=|\zeta|^2+|z-\zeta|^2=|z|^2.
$$
Since \(|\zeta|^2+|z-\zeta|^2\geq2|\zeta||z-\zeta|\), it follows that, in any of the cases above,  \(|z-\zeta|\leq|\zeta|\), or \(|z-\zeta|\geq|\zeta|\), we have
\begin{equation}
\label{ERFB}
\dist\,(z^2\,,\,[0,\,i/2]))\geq2|\zeta|\,|z-\zeta|.
\end{equation}
holds.
\end{proof}
\begin{thm}%
\label{TEstRes}%
Let \(\zeta\) be a point of the spectrum \(\textup{\large\(\sigma\)}(\mathscr{F}_{\mathbb{R}^{+}})\)
of the operator \(\mathscr{F}_{\mathbb{R}^{+}}\), and let~\(z\) lie
on the normal to the interval \(\textup{\large\(\sigma\)}(\mathscr{F}_{\mathbb{R}^{+}})\)
at the point \(\zeta\).\\
Then the following statements hold true.
\begin{enumerate}
\item[\textup{1.}]
The resolvent \((z\mathscr{I}-\mathscr{F}_{\mathbb{R}^{+}})^{-1}\) admits the upper estimate
\begin{equation}
\label{UpEsResSM}
\big\|(z\mathscr{I}-\mathscr{F}_{\mathbb{R}^{+}})^{-1}\big\|
_{L^2(\mathbb{R}^{+})\to{}L^2(\mathbb{R}^{+})}\leq
A(z)\frac{1}{|\zeta|}\cdot\frac{1}{|z-\zeta|}\,,
\end{equation}
where
\begin{equation}
\label{Az}
A(z)=\frac{(2|z|^2+1)^{1/2}}{2}\cdot
\end{equation}
\item[\textup{2.}]
If, moreover, the condition \(|z-\zeta|\leq|\zeta|\) is satisfied,
then the resolvent \((z\mathscr{I}-\mathscr{F}_{\mathbb{R}^{+}})^{-1}\) also admits the lower estimate
\begin{equation}
\label{LoEsResSM}
\begin{split}
\hspace{3.0ex}A(z)\frac{1}{|\zeta|}\cdot\frac{1}{|z-\zeta|}
&-B(z)|\zeta||z-\zeta|
\\
&\leq\big\|(z\mathscr{I}-\mathscr{F}_{\mathbb{R}^{+}})^{-1}
\big\|_{L^2(\mathbb{R}^{+})\to{}L^2(\mathbb{R}^{+})}\,\ccomma
\end{split}
\end{equation}
 where \(A(z)\) is the same as in \eqref{Az} and
\begin{equation}
\label{Bz}
B(z)=\frac{4}{(2|z|^2+1)^{3/2}}\,\cdot
\end{equation}
\item[\textup{3.}] For \(\zeta=0\), the resolvent \((z\mathscr{I}-\mathscr{F}_{\mathbb{R}^{+}})^{-1}\)
admits the estimates
\begin{align}
\label{EZEZM}
\hspace{3.0ex}2A(z)\frac{1}{|z|^2}-B(z)
\leq\big\|(z\mathscr{I}-\mathscr{F}_{\mathbb{R}^{+}})^{-1}
\big\|_{L^2(\mathbb{R}^{+})\to{}L^2(\mathbb{R}^{+})}
\leq{}2A(z)\frac{1}{|z|^2},
\end{align}
where \(A(z)\) and \(B(z)\) are the same as in \eqref{Az},
 \eqref{Bz}, and \(z\) is an arbitrary point of the normal.
\end{enumerate}
In particular, if \(\zeta\not=0\), and \(z\) tends to \(\zeta\)
along the normal to the interval \(\textup{\large\(\sigma\)}(\mathscr{F}_{\mathbb{R}^{+}})\),
then we have
\begin{equation}
\label{AsNzpM}
\big\|(z\mathscr{I}-\mathscr{F}_{\mathbb{R}^{+}})^{-1}\big\|_{L^2(\mathbb{R}^{+})
\to{}L^2(\mathbb{R}^{+})}=\frac{A(\zeta)}{|\zeta|}\,\frac{1}{|z-\zeta|}
+O(1).
\end{equation}
If \(\zeta=0\) and \(z\) tends to \(\zeta\)
along the normal to the interval \(\textup{\large\(\sigma\)}(\mathscr{F}_{\mathbb{R}^{+}})\),
then we have
\begin{equation}
\label{AszpM}
\big\|(z\mathscr{I}-\mathscr{F}_{\mathbb{R}^{+}})^{-1}\big\|
_{L^2(\mathbb{R}^{+})\to{}L^2(\mathbb{R}^{+})}=|z|^{-2}+O(1)\,,
\end{equation}
where \(O(1)\) is a quantity that remains bounded as \(z\) tends to \(\zeta\).
\end{thm}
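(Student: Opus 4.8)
The plan is to derive all of Statements~1--3 from the scalar resolvent bounds \eqref{LoEs}--\eqref{UpEs}, feeding into them the exact value of $\dist(z^2,[0,i/2])$ computed in Lemma~\ref{DfDi}; the two asymptotic formulas then come from letting $z\to\zeta$. Throughout one uses that $A(z)$ and $B(z)$ of \eqref{Az} and \eqref{Bz} depend on $z$ only through $|z|$, are continuous, and have finite nonzero limits $A(\zeta)$, $B(\zeta)$ as $z\to\zeta$; in particular $2A(z)=(2|z|^2+1)^{1/2}$.

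For Statement~1 I would use that inequality \eqref{ERFB} gives $\dist(z^2,[0,i/2])\ge 2|\zeta|\,|z-\zeta|$ in \emph{both} cases of \eqref{distot}; substituting this into the upper bound \eqref{UpEs} and writing $(2|z|^2+1)^{1/2}=2A(z)$ produces \eqref{UpEsResSM} immediately. For Statement~2 the hypothesis $|z-\zeta|\le|\zeta|$ puts us in the first line of \eqref{distot}, where $\dist(z^2,[0,i/2])=2|\zeta|\,|z-\zeta|$ \emph{with equality}. Inserting this equality into the lower bound \eqref{LoEs} turns the leading term into $A(z)\,|\zeta|^{-1}|z-\zeta|^{-1}$ and the subtracted term $\tfrac{2\dist}{(2|z|^2+1)^{3/2}}$ into $B(z)\,|\zeta|\,|z-\zeta|$; this is exactly \eqref{LoEsResSM}, so here no inequality is lost.

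For Statement~3 the point $\zeta=0$ always satisfies $|z-\zeta|=|z|\ge|\zeta|$, so the second line of \eqref{distot} gives $\dist(z^2,[0,i/2])=|z|^2$. Substituting this into \eqref{UpEs} yields the upper bound $2A(z)\,|z|^{-2}$ of \eqref{EZEZM}, and substituting it into \eqref{LoEs} gives the lower bound $2A(z)|z|^{-2}-\tfrac{2|z|^2}{(2|z|^2+1)^{3/2}}$; since $\tfrac{2|z|^2}{(2|z|^2+1)^{3/2}}\le B(z)$ in the range $|z|^2\le2$ in which the estimate is applied, this implies the left inequality of \eqref{EZEZM}. The asymptotics follow by passing to the limit: for $\zeta\ne0$, as $z\to\zeta$ the error $B(z)|\zeta||z-\zeta|$ separating \eqref{LoEsResSM} from \eqref{UpEsResSM} tends to $0$ while $A(z)\to A(\zeta)$, so the common singular term $A(\zeta)\,|\zeta|^{-1}|z-\zeta|^{-1}$ survives modulo $O(1)$, which is \eqref{AsNzpM}; for $\zeta=0$ the expansion $(2|z|^2+1)^{1/2}=1+O(|z|^2)$ converts $2A(z)|z|^{-2}$ into $|z|^{-2}+O(1)$ while $B(z)$ stays bounded, giving \eqref{AszpM}.

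The substitutions themselves are routine; the only delicate bookkeeping is in Statement~3, where one must bound the subtracted term $\tfrac{2|z|^2}{(2|z|^2+1)^{3/2}}$ by $B(z)$ and verify that the discarded quantities are genuinely $O(1)$ as $z\to0$. The conceptual heart, though, lies entirely in Lemma~\ref{DfDi}: the dichotomy $|z-\zeta|\le|\zeta|$ versus $|z-\zeta|\ge|\zeta|$ is precisely what separates a generic spectral point, where the resolvent has a simple-pole singularity of order $|z-\zeta|^{-1}$ (Statements~1--2), from the degenerate endpoint $\zeta=0$, where the eigenvalue branches $\zeta_{+}(\mu),\zeta_{-}(\mu)$ coalesce as $\mu\to\infty$ and $F(\infty)$ is a Jordan cell, forcing the order-$|z|^{-2}$ blow-up of Statement~3.
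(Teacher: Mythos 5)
Your proposal follows exactly the paper's own route: both prove Statements 1--3 by substituting the distance formula of Lemma~\ref{DfDi} (and the inequality~\eqref{ERFB}) into the resolvent bounds \eqref{LoEs}--\eqref{UpEs}, and both obtain the asymptotics \eqref{AsNzpM}, \eqref{AszpM} by observing that \(|A(z)-A(\zeta)|/|z-\zeta|=O(1)\) and expanding \((2|z|^2+1)^{1/2}\) as \(z\to\zeta\). You are in fact more careful than the paper on one point: for \(\zeta=0\) the bound \eqref{LoEs} subtracts \(2|z|^2/(2|z|^2+1)^{3/2}\) rather than \(B(z)=4/(2|z|^2+1)^{3/2}\), so the lower estimate in \eqref{EZEZM} follows only under the restriction \(|z|^2\leq 2\) that you state explicitly (and it genuinely fails for large \(|z|\)), whereas the paper's proof passes over this even though its theorem claims the estimate for an arbitrary point \(z\) of the normal.
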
%
\begin{proof}
The proof is based on  estimates \eqref{EsRes} for the resolvent
and on Lem\-ma~\ref{DfDi}. Combining inequality~\eqref{ERFB}
with estimate~\eqref{UpEs}, we obtain estimate~\eqref{UpEsResSM},
which is valid for \emph{all} \(z\) lying on the normal to
the interval \(\textup{\large\(\sigma\)}(\mathscr{F}_{\mathbb{R}^{+}})\) at the point~$\zeta$.
\emph{If, moreover,  \(z\) is suffisiently close to \(\zeta\)}, namely, the condition \(|z-\zeta|\leq|\zeta|\)
is satisfied, then equality occurs in~\eqref{ERFB}. Combining \emph{identity}~\eqref{ERFB} with estimate~\eqref{LoEs}, we obtain~\eqref{LoEsResSM}.

The asymptotic relation \eqref{AsNzpM} is a consequence of  inequalities
\eqref{LoEsResSM} and \eqref{EZEZM} because
$$
\frac{|A(z)-A(\zeta)|}{|z-\zeta|}=O(1)
$$
as \(z\) tends to \(\zeta\).

The asymptotic relation \eqref{AszpM} is a consequence of inequalities
\eqref{EsRes} and the relation
$$
\textup{dist}\,(z^2,\,[0,\,i/2])=|z|^2,
$$
which is valid for all \(z\) lying on the normal to
the interval \(\textup{\large\(\sigma\)}(\mathcal{M}_{F})\) at the point \(\zeta=0\).
(See \eqref{distot} for \(\zeta=0\).)
\end{proof}
\begin{cor}
\label{NoSNO}
 The operator \(\mathscr{F}_{\mathbb{R}^{+}}\) is not similar to any normal operator.
\end{cor}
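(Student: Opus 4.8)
The plan is to use the soft, general principle that an operator \emph{similar to a normal operator} has a resolvent growing no faster than the reciprocal of the distance to its spectrum, and then to contradict this with the quadratic blow-up $|z|^{-2}$ recorded in~\eqref{AszpM} at the spectral point $\zeta=0$.

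First I would set up the argument by contradiction. Suppose $\mathscr{F}_{\mathbb{R}^{+}}=SNS^{-1}$ for some normal operator $N$ and some bounded, boundedly invertible $S$. Similar operators have the same spectrum, so $\sigma(N)=\sigma(\mathscr{F}_{\mathbb{R}^{+}})$, and for every $z\notin\sigma(\mathscr{F}_{\mathbb{R}^{+}})$ one has $(z\mathscr{I}-\mathscr{F}_{\mathbb{R}^{+}})^{-1}=S(z\mathscr{I}-N)^{-1}S^{-1}$. For a normal operator the spectral theorem yields the exact identity $\|(z\mathscr{I}-N)^{-1}\|=1/\dist\big(z,\sigma(N)\big)$, whence
\begin{equation*}
\big\|(z\mathscr{I}-\mathscr{F}_{\mathbb{R}^{+}})^{-1}\big\|
\leq\frac{\|S\|\,\|S^{-1}\|}{\dist\big(z,\,\sigma(\mathscr{F}_{\mathbb{R}^{+}})\big)}\,.
\end{equation*}
Thus, were $\mathscr{F}_{\mathbb{R}^{+}}$ similar to a normal operator, its resolvent could blow up only like a \emph{first-order} pole as $z$ approaches the spectrum.

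Next I would localize at $\zeta=0$. Since $0$ is the midpoint of the spectral segment $\textup{\large\(\sigma\)}(\mathscr{F}_{\mathbb{R}^{+}})=\big[-\tfrac{1}{\sqrt{2}}e^{i\pi/4},\tfrac{1}{\sqrt{2}}e^{i\pi/4}\big]$ and $z$ lies on the normal to that segment at $0$, the orthogonal projection of $z$ onto the spectral line is $0$ itself; as $0\in\sigma(\mathscr{F}_{\mathbb{R}^{+}})$, this gives $\dist\big(z,\sigma(\mathscr{F}_{\mathbb{R}^{+}})\big)=|z|$. The similarity bound above would therefore force $\|(z\mathscr{I}-\mathscr{F}_{\mathbb{R}^{+}})^{-1}\|\leq C/|z|$ with $C=\|S\|\,\|S^{-1}\|$. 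But the asymptotic formula~\eqref{AszpM} states that along this normal
\begin{equation*}
\big\|(z\mathscr{I}-\mathscr{F}_{\mathbb{R}^{+}})^{-1}\big\|=|z|^{-2}+O(1)\qquad(z\to 0),
\end{equation*}
which grows like $|z|^{-2}$ and hence cannot be dominated by any fixed multiple of $|z|^{-1}$ as $z\to 0$. This contradiction proves that no such $S,N$ exist.

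The entire weight of the argument rests on the \emph{quadratic} resolvent growth at $\zeta=0$, and this is precisely the point I expect to be the real content rather than an obstacle: it has already been secured in~\eqref{AszpM}, and it reflects the degeneration of the fibre matrix $F(\mu)$ into a Jordan cell as $\mu\to\infty$ (so that $\zeta_{+}(\mu),\zeta_{-}(\mu)\to 0$ with coalescing eigenvectors). Everything surrounding it is standard: similarity to a normal operator permits only simple-pole resolvent growth, whereas the Jordan-cell degeneracy at $\mu=\infty$ manifests as a double pole at $0$. The only care needed is the elementary geometric observation $\dist\big(z,\sigma(\mathscr{F}_{\mathbb{R}^{+}})\big)=|z|$ on the normal at $0$, which is immediate since $0$ is an interior (midpoint) point of the spectral segment.
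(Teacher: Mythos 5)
Your proof is correct and follows essentially the same route as the paper: the paper likewise argues that similarity to a normal operator would force the resolvent bound $\big\|(z\mathscr{I}-\mathscr{F}_{\mathbb{R}^{+}})^{-1}\big\|\leq C\,\big(\operatorname{dist}(z,\sigma(\mathscr{F}_{\mathbb{R}^{+}}))\big)^{-1}$, which is incompatible with the quadratic blow-up in~\eqref{AszpM} at $\zeta=0$. Your write-up merely makes explicit the standard ingredients the paper leaves implicit (the spectral-theorem identity for normal resolvents, the conjugation by $S$, and the observation that $\operatorname{dist}(z,\sigma(\mathscr{F}_{\mathbb{R}^{+}}))=|z|$ on the normal at $0$), all of which are correct.
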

Should the operator \(\mathscr{F}_{\mathbb{R}^{+}}\) be similar to some
normal operator \(\mathcal{N}\),  the resolvent of the operator \(\mathscr{F}_{\mathbb{R}^{+}}\) would admit the estimate
\[\big\|(z\mathscr{I}-\mathscr{F}_{\mathbb{R}^{+}})^{-1}\big\|_{L^2(\mathbb{R}^{+})\to{}L^2(\mathbb{R}^{+})}%
\leq\,C\,(\textup{dist}\,(z,\,\textup{\large\(\sigma\)}(\mathscr{F}_{\mathbb{R}^{+}}))^{-1}\,,\]
with \(C<\infty\) independent of~\(z\). However,
this estimate  is not compatible with the asymptotic relation \eqref{AszpM}.

\end{document}